\newtheorem{theorem}{Theorem}[section]
\newtheorem{lemma}[theorem]{Lemma}
\newtheorem{cor}[theorem]{Corollary}
\theoremstyle{definition}
\newtheorem{prop}[theorem]{Proposition}
\theoremstyle{remark}
\newtheorem{remark}[theorem]{Remark}
\numberwithin{equation}{section}
\newcommand{\Ace}{A_{c,\eps}}
\newcommand{\lce}{\lambda_{c,\eps}^{-1}}
\newcommand{\nce}{\eta_{c,\eps}}
\newcommand{\pce}{\pi^*_{c,\eps}}
\newcommand{\R}{\mathbb{R}}
\newcommand{\C}{\mathbb{C}}
\newcommand{\abs}[1]{\mathord{\left|#1\right|}}
\newcommand{\nrm}[1]{\mathord{\left\lVert #1 \right\rVert}}
\newcommand{\eps}{\varepsilon}
\newcommand{\lset}{\left\lbrace}
\newcommand{\rset}{\right\rbrace}
\newcommand{\F}[1]{\widehat{#1}}
\newcommand{\Digamma}{\frac{\Gamma'}{\Gamma}}
\newcommand{\half}{\frac{1}{2}}
\newcommand{\eand}{\,\,\,\text{ and }\,\,\,}
\newcommand{\zsum}{\sideset{}{^*}\sum}
\newcommand{\Fdx}{F_{\delta,x}}
\newcommand{\Gdx}{G_{\delta,x}}
\DeclareMathOperator{\Ren}{Re}
\DeclareMathOperator{\Imn}{Im}
\DeclareMathOperator{\li}{li}
\DeclareMathOperator{\Ei}{Ei}
\DeclareMathOperator{\E}{Ei}
\DeclareMathOperator{\supp}{supp}
\begin{document}

\title[Analytic Calculation of $\pi(x)$]{An improved analytic Method for calculating $\pi(x)$}

\author{Jan B\"uthe}
\address{Hausdorff Center for Mathematics, Endenicher Allee 62, 53115 Bonn, Germany}
\email{jan.buethe@hcm.uni-bonn.de}

\subjclass[2010]{Primary 11Y35, Secondary 11Y70}

\date{\today}

\dedicatory{}

\begin{abstract}
We provide an improved version of the analytic method of Franke et. al. for calculating the prime-counting function $\pi(x)$, which is more flexible and, for calculations not assuming the Riemann Hypothesis, also more efficient than the original method. The new method has recently been used to calculate the value $\pi(10^{25}) = 176,846,309,399,143,769,411,680$. 
\end{abstract}

\maketitle

\section{Introduction}

In \cite{LO87} Lagarias and Odlyzko presented an analytic algorithm which could calculate $\pi(x)$, the number of prime numbers not exceeding $x$, in run time $O(x^{1/2+\eps})$ for every $\eps>0$. The method is based on a modification of the well-known Perron formula, which reduces the calculation of $\pi(x)$ to numerically evaluating a complex curve integral involving the Riemann zeta function and calculating a correction term involving the powers of prime numbers in a neighbourhood of $x$. Although this method is asymptotically faster than the combinatorial Meissel-Lehmer-Lagarias-Miller-Odlyzko method \cite{LMO85}, which calculates $\pi(x)$ in run time $O(x^{2/3+\eps})$, the method was considered impractical at this time since the implied constant is large.

Recently, two new variants of the analytic method have been developed and implemented independently by Franke et. al. \cite{FKBJ} and Platt \cite{Platt15}, and for the first time records in the computation of $\pi(x)$ have been set with analytic methods: in 2010 Franke, Kleinjung, Jost and the author calculated the value 
\[
\pi(10^{24}) = 18,435,599,767,349,200,867,866,
\]
under the assumption of the Riemann Hypothesis (RH), which was later confirmed by an unconditional computation of Platt \cite{Platt15}.

The new methods are based on explicit formulas, replacing the curve integral in the original method by a sum over the non-trivial zeros of the Riemann zeta function. To achieve maximal efficiency the methods require the pre-computation of the zeros with imaginary part roughly up to $\sqrt{x}$ within an accuracy of $x^{-O(1)}$, which has conjectural run time $O(x^{1/2+\eps})$ \cite{OS88}. The evaluation of the sum over zeros is then much faster than evaluating the curve integral numerically, since the spacing between the zeros in consideration is comparably large.

The main difference between the methods in \cite{FKBJ,Platt15} is the choice of the kernel function, which is utilized to speed up the convergence in the Riemann explicit formula. Platt's method, an advancement of the work in \cite{Galway04}, uses the Gaussian function, while Franke chooses Logan's function \cite{logan88}. The latter satisfies an optimality condition which is very well-suited for this problem.

Although it could be shown in \cite{FKBJ} that the Logan function yields the more efficient algorithm, the restrictions on the parameters
in the original methods were too strong for unconditional calculations in certain situations, which led to the assumption of the RH in the calculation of $\pi(10^{24})$. This has been solved in an ad-hoc way by modifying the kernel function and introducing an additional parameter, giving rise to a second method in \cite{FKBJ}. In this paper we present a third and final version, which is less restrictive than the first method, simpler than the second method and generally more efficient for unconditional calculations.

 The new  method has also been implemented in cooperation with the authors of \cite{FKBJ}, and we calculated the value
\[
 \pi(10^{25}) = 176,846,309,399,143,769,411,680
\]
without assuming the RH. We used the zeros with imaginary part up to $10^{11}$ for calculation, which resulted in a run time of $40,000$ CPU hours. The run time could have been reduced considerably by the use of additional zeros.

\tableofcontents

\section{Description of the method}\label{s:pice}

Let $\pi^*(x) = \half \sum_{p^m < x} \frac{1}{m} + \half\sum_{p^m\leq x} \frac{1}{m}$ denote the normalized Riemann prime-counting function. This function is related to the non-trivial zeros of the Riemann zeta function by the well-known Riemann explicit formula 
\begin{equation}\label{e:Riemann-exp-form}
\pi^*(x) = \li(x) - \zsum_{\substack{\rho\\\Im(\rho)>0}}\Ei(\rho\log x) \\
 - \log(2) + \int_{x}^\infty \frac{dt}{t\log(t)(t^2-1)}
\end{equation}
\cite{Riemann59,Mangoldt95}.  Here $\Ei(z)$ is the antiderivative of $e^z/z$ in $\C\setminus(-\infty,0]$ satisfying
\[
\Ei(1) = \lim_{\eps\searrow 0} \Bigl(\int_{-\infty}^{-\eps} + \int_{\eps}^1\Bigr) \frac{e^\xi}{\xi}\, d\xi,
\]
$\li(x) = \Ei(\log(x))$ denotes the logarithmic integral, and the star on the sum over zeros indicates that it is calculated as
\[
\lim_{T\to \infty} \sum_{\abs{\Im(\rho)}<T}.
\]

Calculating $\pi(x)$ by approximating the sum over zeros in \eqref{e:Riemann-exp-form} is inefficient, since the sum converges too slowly \cite{RG70}. This problem can be solved by considering continuous approximations to $\pi^*(x)$, satisfying an explicit formula wherein the sum over zeros converges fast.

Such approximations can be constructed using the Weil-Barner explicit formula \cite{weil52,barner81,Lang86} for the Riemann zeta function. Suppose that $g\colon \R \rightarrow \C$ satisfies the following conditions:
\vspace{.2cm}
\begin{enumerate}[(B1)]
\item there exists a $h>0$ such that $g(t) e^{(\half + h)\abs{t}}$ is of bounded variation on $\R$, \label{en:BV}
\item there exists an $\eps>0$ such that $2 g(0) = g(t) + g(-t) + O(\abs{t}^\eps)$ for $t\to 0$,\label{en:origin}
\item $g(t) = \half\lim_{h\searrow 0} \bigl(g(t+h)  + g(t-h)\bigr)$ for all $t\in \R$.\label{en:normalized}
\end{enumerate}
\vspace{.2cm}
Then the Weil-Barner explicit formula is given by
\begin{equation}\label{e:WB-exp-form}
w_s(\hat g) = w_f(g) + w_\infty(g),
\end{equation}
where
\begin{align}
w_s (\hat g) &= \zsum_\rho \hat g \Bigl( \frac \rho i - \frac 1{2i}\Bigr) - \hat g (i/2) - \hat g(-i/2),\label{d:ws}\\
w_f (g) &= -\sum_p \sum_{m=1}^\infty \frac{\log p}{p^{m/2}}\bigl( g(m\log p) + g(-m\log p)\bigr)\label{d:wf}, \\
\intertext{and}
w_\infty(g) &= \Bigl(\frac{\Gamma'}{\Gamma}(1/4) - \log \pi\Bigr)g(0) -\int_{0}^\infty \frac{g(t) + g(-t)-2g(0)}{1-e^{-2t}}e^{-t/2}\, dt,\label{d:w8}
\end{align}
and where
\begin{equation*}
\hat g(\xi) = \int_{-\infty}^\infty g(t) e^{i\xi t}\, dt
\end{equation*}
denotes the Fourier transform of $g$. The class of test functions satisfying (B\ref{en:BV}) - (B\ref{en:normalized}) is usually referred to as the Barner class.

The Riemann explicit formula follows from \eqref{e:WB-exp-form} by considering a suitable approximating sequence for the function $\chi^*_{(-\infty,\log x]}(t) e^{t/2}/t$, where
\begin{equation*}
\chi^*_A (t) = 
\begin{cases}
0	& t\notin \overline{A} \\
1 & t\in A\setminus\partial A \\
1/2	&t\in \partial A
\end{cases}
\end{equation*}
denotes the normalized characteristic function \cite{BFJK13}. The slow convergence in \eqref{e:Riemann-exp-form} is caused by the discontinuity at $t=\log x$, which we intend to remove by taking the convolution with the Fourier transform of the Logan function
\[
\ell_c(t) = \frac{c}{\sinh c} \frac{\sin(\sqrt{t^2-c^2})}{\sqrt{t^2-c^2}}.
\]
This function minimizes the functional
\begin{equation}\label{e:logan-opt}
\int_{\abs{t}>c} \abs{\frac{f(t)}{t}}\, dt
\end{equation}
among a suitable class of test functions with $f(0)=1$ and $\supp \hat f \subset [-1,1]$, 
the minimal value being $2\log \frac{1+e^{-c}}{1-e^{-c}}$ \cite{logan88}. This will be beneficial for truncating the sum over zeros in the explicit formula where the size of the remainder will be controlled by the parameter $c$. Explicitly, the inverse
  Fourier transform of $\ell_c$ is given by
\[
 \eta_{c}(y) = \chi^*_{[-1,1]}(y) \frac{c}{2\sinh c} I_0(c\sqrt{1-y^2}),
\]
where $I_0 (y) = \sum_{n=0}^\infty (y/2)^{2n}/(2n)!$ denotes the $0$-th modified Bessel function (see \cite{FKBJ}).
Since we will be dealing with dilations of $\ell_c$ we introduce the shorter notations
\begin{equation*}
\eta_{c,\eps}(y) =\frac 1\eps \eta_c(y/\eps), \eand \ell_{c,\eps}(t) := \hat \eta_{c,\eps}(t) =\ell_{c}(\eps t).
\end{equation*}

Now let $f_k(t) = e^{t/2}/t^k$,  $\lambda_{c,\eps} = \ell_{c,\eps}(i/2)$, and $A_{c,\eps} = -\ell''_{c,\eps}(0)/2$,
and for $\abs{t}>\eps$ let
\begin{equation}\label{e:def-phi}
 \phi_{x,c,\eps}(t) = \lce\Bigl(\chi_{(-\infty,\log x]}\Bigl(f_1 +A_{c,\eps}\Bigl( f_2 - 2f_3\Bigr)\Bigr)\Bigr)\ast \nce(t),
\end{equation}
where the convolution operator $\ast$ is defined as usual by
\begin{equation*}
f\ast g(x) = \int_{-\infty}^\infty f(y) g(x-y)\, dy.
\end{equation*}
Then, for all $c>0$ and $0<\eps<\log 2$ we define the modified prime-counting function
\begin{equation}\label{e:mod-pi}
 \pi^*_{c,\eps}(x) =  \sum_{p^m} \frac{\log p}{p^{m/2}}\phi_{x,c,\eps} (m\log p).
\end{equation}
Under certain mild restrictions on the parameters, we will show that
\begin{itemize}
\item $\pi^*(x)$ can be calculated within an accuracy of $O(\eps^3 x)$ from $\pi^*_{c,\eps}(x)$ by evaluating a sum over the powers of prime numbers in $[e^{-\eps} x, e^{\eps} x]$
 \item $\pi^*_{c,\eps}(x)$ can be calculated within an accuracy of $O(e^{-c} x^h\log\log(x))$ using the zeros with imaginary part up to $c/\eps$, where $h=1/2$ if the RH is assumed and $h=1$ otherwise.
\end{itemize} 
So for $c \geq h\log(x) + \log\log\log(x)  +  C_1$ and $\eps < C_2x^{-1/3}$, with suitable constants $C_1$ and $C_2$ the exact value of $\pi(x)$ can be determined from the approximation to $\pi^*(x)$ by removing the contribution of higher prime powers. These results can be improved for calculations not assuming the RH, which increases the method's efficiency compared to the methods in \cite{FKBJ}.

Since the remaining part of this paper involves many explicit estimates, we will use Turing's big theta notation and 
write \textit{$f = \Theta(g)$ in $U$} if $\abs{f(t)} \leq g(t)$ holds for all $t\in U$.

\section{The difference $\pi^*(x)-\pi_{c,\eps}^*(x)$}\label{s:diff}
The main contribution to the difference $\pi^*(x)-\pce(x)$ is given by the prime powers in $[e^{-\eps}x,e^\eps x]$
 and may be expressed in terms of the functions
\[
\mu_{c,\eps}(t) =
\begin{cases}
-\int_{-\infty}^t\nce(\tau)\,d\tau	& t < 0\\
-\mu_{c,\eps}(-t)			&t>0\\
 0					&t=0
\end{cases}
\]
and
\[
\nu_{c,\eps}(t) = \int_{-\infty}^t \mu_{c,\eps}(\tau)\, d\tau.
\]


\begin{theorem}\label{t:pice-to-pi}
Let $x>e^{10}$, $c\geq 1,$ $x^{-1}<\eps \leq 10^{-3}$, and  let
\begin{equation*}
M_{x,c,\eps}(t) = \lce\Bigl[\mu_{c,\eps}\bigl(\log\tfrac{t}{x}\bigr) + \Bigl(\frac{1}{\log t} - \frac{1}{2}\Bigr)\Bigl(\mu_{c,\eps}\bigl(\log\tfrac{t}{x}\bigr)\log\tfrac{t}{x} - \nu_{c,\eps}\bigl(\log\tfrac{t}{x}\bigr)\Bigr)\Bigr].
\end{equation*}
Then we have
\begin{equation}\label{e:pice-to-pi}
\pi^*_{c,\eps}(x) = \pi^*(x) + \sum_{e^{-\eps}x <p^m<e^{\eps} x}\frac{1}{m}M_{x,c,\eps}(p^m) + R(x,c,\eps),
\end{equation}
where
\begin{equation}\label{e:R-bound0}
\abs{R(x,c,\eps)} \leq \frac{0.57\, \eps^3 x}{c\log(\eps x)} + \frac{39\, \eps^4 x}{c^3} + \frac{0.13 \, \eps^2 \log \log(2x^2)}{c}.  
\end{equation}
\end{theorem}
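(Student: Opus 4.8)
The plan is to compare $\pce(x)$ and $\pi^*(x)$ termwise as sums over prime powers. From the normalization of $\pi^*$ one has
\[
\pi^*(x) = \sum_{p^m}\frac{\log p}{p^{m/2}}\,\chi^*_{(-\infty,\log x]}(m\log p)\,f_1(m\log p),
\]
because $\frac{\log p}{p^{m/2}}f_1(m\log p) = \frac1m$ when $m\log p\le\log x$ and the normalized characteristic function supplies the half-weight $\frac1{2m}$ at $p^m=x$. Subtracting this from \eqref{e:mod-pi} gives $\pce(x)-\pi^*(x) = \sum_{p^m}\frac{\log p}{p^{m/2}}\Psi(m\log p)$ with $\Psi := \phi_{x,c,\eps} - \chi^*_{(-\infty,\log x]}f_1$. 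Since $\supp\nce\subseteq[-\eps,\eps]$, the function $\Psi$ vanishes identically for $t\ge\log x+\eps$ (so the sum is finite), while for $t\le\log x-\eps$ the cut-off $\chi_{(-\infty,\log x]}$ is immaterial and $\Psi(t) = \Psi_0(t) := \lce\bigl[(f_1+\Ace(f_2-2f_3))\ast\nce\bigr](t) - f_1(t)$. This splits $\pce(x)-\pi^*(x)$ into a bulk sum over $p^m\le e^{-\eps}x$ and a window sum over $e^{-\eps}x<p^m<e^\eps x$, which I would treat separately.

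For the bulk sum I would expand $e^{-t/2}\Psi_0(t)$ by writing it as an explicit integral against $\nce$ and Taylor-expanding $(t-u)^{-k}$ in powers of $u/t$; this is legitimate because $m\log p\ge\log 2>\eps$ on the support of the prime sum, and it produces a series in $t^{-1}$ whose coefficients are the moments $\int u^j e^{-u/2}\nce(u)\,du$. The relation $\lce\int e^{-u/2}\nce(u)\,du = 1$ (killing the $t^{-1}$ term), together with the identity $f_2-2f_3 = \tfrac14 f_1-f_1''$ and the choices $\lambda_{c,\eps}=\ell_{c,\eps}(i/2)$, $\Ace=-\tfrac12\ell_{c,\eps}''(0)$, are exactly what make the $t^{-1}$, $t^{-2}$ and $t^{-3}$ coefficients cancel down to quantities of size $O(\eps^4)$; estimating $\ell_{c,\eps}(\xi)=\ell_c(\eps\xi)$ near $\xi=0$ via $\ell_c''(0)=c^{-2}-c^{-1}\coth c$ (so $0<\Ace<\eps^2/(2c)$) and the analogous expression for $\ell_c^{(4)}(0)$ supplies the $c$-dependence. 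One then gets a pointwise bound of the shape $\abs{\Psi_0(t)}\le K_{c,\eps}\,e^{t/2}/t^2$ with $K_{c,\eps}$ of order $\eps^4/c^2$, and bounds $\sum_{p^m\le e^{-\eps}x}\frac{\log p}{p^{m/2}}\abs{\Psi_0(m\log p)}$ using explicit Chebyshev/Mertens estimates for sums of $(m^2\log p)^{-1}$ and $p^{-m/2}$ over prime powers; this yields the $\eps^4x/c^3$ and $\eps^2\log\log(2x^2)/c$ contributions to $R$.

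For the window sum, for $e^{-\eps}x<p^m<e^\eps x$ put $\tau=\log(p^m/x)\in(-\eps,\eps)$ and isolate the boundary term: $\Psi(m\log p)=\Psi_0(m\log p) + f_1(m\log p)\bigl(1-\chi^*_{(-\infty,0]}(\tau)\bigr) - \lce\int_{\log x}^\infty\bigl(f_1+\Ace(f_2-2f_3)\bigr)(y)\,\nce(m\log p-y)\,dy$. Substituting $y=\log x+s$, using $f_1(\log x+s)=\sqrt x\,e^{s/2}/(\log x+s)$, multiplying through by $\frac{\log p}{p^{m/2}}$ and expanding both $e^{(s-\tau)/2}$ and $(\log x+s)^{-1}=(\log t+(s-\tau))^{-1}$ to first order, the zeroth-order part assembles into $\frac1m\lce\mu_{c,\eps}(\log\frac{p^m}{x})$ and the two first-order coefficients combine to produce the factor $\frac1{\log t}-\half$; integrating by parts turns the resulting integrals of $\nce$ and $\mu_{c,\eps}$ into $\mu_{c,\eps}$ and $\nu_{c,\eps}$, and one checks that the main term is precisely $\frac1m M_{x,c,\eps}(p^m)$. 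What remains — the second-order remainders of those two expansions, the $\Ace(f_2-2f_3)$ boundary integral, and $\Psi_0$ over the window — is $O(\eps^2/c)$ per prime power after multiplication by $\frac1m$, and summing over the $O(\eps x/\log x)$ prime powers in $(e^{-\eps}x,e^\eps x)$ via $\psi(e^\eps x)-\psi(e^{-\eps}x)$-type bounds produces the $\eps^3x/(c\log(\eps x))$ contribution. Collecting the three pieces gives $R(x,c,\eps)$, and the numerical inequality \eqref{e:R-bound0} then follows from the hypotheses $x>e^{10}$, $c\ge1$, $x^{-1}<\eps\le10^{-3}$.

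The conceptual skeleton — termwise comparison, truncation from $\supp\nce$, the bulk/window split, and identifying $M_{x,c,\eps}$ as the first-order boundary correction — is routine convolution bookkeeping. The main obstacle is the quantitative part: obtaining bounds on $\lambda_{c,\eps}$, $\Ace$, the residual moments, and the sizes of $\mu_{c,\eps},\nu_{c,\eps}$ that are simultaneously sharp in $\eps$ and in $c$ (this rests on the near-origin expansion of $\ell_c$ and on the cancellations built into the definition \eqref{e:def-phi} of $\phi_{x,c,\eps}$), and then carrying through the explicit prime-power estimates precisely enough to reach the stated constants $0.57$, $39$, $0.13$.
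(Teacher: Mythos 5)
Your approach matches the paper's: both proofs compare $\pce(x)$ and $\pi^*(x)$ termwise as sums over prime powers, split the difference $\Psi=\phi_{x,c,\eps}-\chi^*_{(-\infty,\log x]}f_1$ into a bulk piece (where $\Psi=\phi_{\infty,c,\eps}-f_1$) and a window piece, exploit the moment cancellations built into $\lambda_{c,\eps}$ and $A_{c,\eps}$ via Taylor expansion in $\tau/t$, extract $M_{x,c,\eps}$ as the first-order boundary correction by integrating $\nce$ and $\mu_{c,\eps}$ by parts, and then use explicit prime-counting estimates. The paper packages the window analysis as Lemma~\ref{l:phi-at-logx} and the bulk analysis as Lemma~\ref{l:phi-infty}; your decomposition of $\Psi$ is a cosmetic reorganisation of the same thing.

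There is, however, one concrete misallocation you should fix. You attribute the $\eps^2\log\log(2x^2)/c$ term in \eqref{e:R-bound0} to the bulk sum over $p^m\leq e^{-\eps}x$. That cannot work: the bulk bound $\abs{\Psi_0(t)}\leq K_{c,\eps}\,e^{t/2}/t^2$ with $K_{c,\eps}\asymp\eps^4/c^2$ gives
\begin{equation*}
\sum_{p^m\leq e^{-\eps}x}\frac{\log p}{p^{m/2}}\abs{\Psi_0(m\log p)} \leq K_{c,\eps}\sum_{p^m\leq x}\frac{1}{m^2\log p} \leq K_{c,\eps}\,x,
\end{equation*}
which is $O(\eps^4 x/c^2)$ and produces no $\log\log$ factor whatsoever. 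In the paper the $\log\log$ term comes from the \emph{window}, and specifically from the higher prime powers $p^m$ with $m\geq 2$ in $(e^{-\eps}x,e^\eps x)$: one bounds $\sum_{p^m\in I,\,m\geq 2}\frac1m$ by $4.01\,\eps\sqrt{x}+\log\log(2x^2)$ (Lemma~\ref{l:ppow-bound}), multiplies by the per-term error $O(\eps^2/c)$, and this is where $\eps^2\log\log(2x^2)/c$ enters. Your window analysis only invokes a Brun--Titchmarsh-style count of primes (which yields the $\eps^3 x/(c\log(\eps x))$ term); you would need to add a separate estimate for the higher prime powers in the window to recover the $\log\log$ contribution. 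Relatedly, ``$\psi(e^\eps x)-\psi(e^{-\eps}x)$-type bounds'' is not quite the right tool here: the $\log(\eps x)$ in the denominator is the signature of Brun--Titchmarsh applied to the short interval, and an unconditional $\psi$-difference estimate for an interval of length $\approx 2\eps x$ around $x$ would not give a bound of this sharpness.
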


\begin{remark}
Since the method will require $c\geq \frac 12 \log x$, the error term in \eqref{e:R-bound0} will be negligible for $ \eps < \delta \log(x)^{2/3}/x^{1/3}$ with some $\delta>0$. This upper bound for $\eps$ translates into a lower bound of size $O((x\log x)^{1/3})$ for the truncation bound in the sum over zeros, i.e. the number $B$ such that contributions of zeros $\rho$ with $\abs{\Im(\rho)}>B$ will be discarded. Since the optimal truncation bound is rather of size $\sqrt{x}$, this does not impose any practical restrictions.
\end{remark}

The proof needs some preparation. First we estimate the difference $\phi_{\infty,c,\eps} - f_1(t)$, where $\phi_{\infty,c,\eps} = \lim_{x\to\infty} \phi_{x,c,\eps}$.


\begin{lemma}\label{l:phi-infty}
Let $\eps \leq 0.001$, $c\geq 1$ and $\abs{t}\geq\log 2$. Then we have
\begin{equation}\label{e:phi-infty}
\phi_{\infty,c,\eps}(t) = f_1(t) + \Theta\Bigl(39\frac{\eps^4}{c^2}f_2(t)\Bigr).
\end{equation} 
\end{lemma}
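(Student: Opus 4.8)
The plan is to exploit the fact that $\phi_{\infty,c,\eps}$ is built out of $f_1$, $f_2-2f_3$ and the kernel $\nce$ via the convolution in \eqref{e:def-phi}, with the cutoff $\chi_{(-\infty,\log x]}$ sent to $1$ in the limit $x\to\infty$. So for $\abs t \geq \log 2 > \eps$ one has
\[
\phi_{\infty,c,\eps}(t) = \lce\,\bigl(f_1 + A_{c,\eps}(f_2 - 2f_3)\bigr)\ast\nce\,(t).
\]
The first step is to compute the ``main term'' of the convolution $f_j\ast\nce$ by Taylor-expanding $f_j(t-y)$ in $y$ about $y=0$ and using the moments of $\nce$: since $\nce$ is an even probability-type kernel supported in $[-\eps,\eps]$, $\int\nce = \ell_{c,\eps}(0)=1$, $\int y\,\nce(y)\,dy = 0$, and $\int y^2\nce(y)\,dy = -\ell_{c,\eps}''(0) = 2A_{c,\eps}$. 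Hence
\[
f_j\ast\nce(t) = f_j(t) + A_{c,\eps}f_j''(t) + \text{(higher-order error)}.
\]
Applying this to $f_1$ gives $f_1\ast\nce = f_1 + A_{c,\eps}f_1'' + E_1$, and since a direct differentiation shows $f_1''(t) = f_1(t)(\tfrac14 - \tfrac1t + \tfrac2{t^2})\cdot\frac{1}{?}$ — more precisely one checks $f_1'' = \tfrac14 f_1 - f_2 + 2f_3$ — the $A_{c,\eps}f_1''$ term is cancelled, up to the factor $\tfrac14 A_{c,\eps}f_1$, by the deliberately-inserted $A_{c,\eps}(f_2-2f_3)$ term (whose own convolution with $\nce$ contributes only at higher order). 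The normalizing factor $\lce = \ell_{c,\eps}(i/2)^{-1}$ is then exactly what absorbs the leftover $(1+\tfrac14 A_{c,\eps})f_1$-type contribution: indeed $\ell_{c,\eps}(i/2)$ is the value at $i/2$ of the Fourier transform, which is $\int\nce(y)e^{y/2}\,dy = 1 + \tfrac14\cdot 2A_{c,\eps}\cdot\tfrac12\cdots$, so that $\lce(1 + \text{that}) = 1$ exactly. After these cancellations what survives is precisely a Taylor remainder of order $\eps^4$, coming both from the $O(y^4)$ term in the expansion of $f_1$ (the cubic moment of $\nce$ vanishes by symmetry) and from the $O(y^2)$ term in the expansion of $f_2-2f_3$ multiplied by $A_{c,\eps} = O(\eps^2/c\cdot\text{?})$.

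The key quantitative inputs I would isolate as sub-claims are: (i) bounds for the moments of $\nce$, in particular $0 \le A_{c,\eps} \le C\eps^2$ for an explicit $C$ (this follows from $\eta_c$ being supported in $[-1,1]$ with $I_0$ increasing, so $A_{c,\eps} = \eps^2 A_{c,1}$ and $A_{c,1}$ is bounded — in fact $A_{c,1} = \frac{c}{8}\coth c - \frac18$ or similar, uniformly $O(1)$ for $c\ge 1$, with the relevant smallness coming from the $\eps^2$); (ii) the exact algebraic identity $f_1'' = \tfrac14 f_1 - f_2 + 2f_3$ and analogous expressions for the second derivatives of $f_2,f_3$ so that the error terms can be re-expressed as multiples of $f_2(t)$ for $\abs t\ge\log 2$; (iii) Taylor's theorem with integral remainder applied on the interval $[t-\eps,t+\eps]$, using that $\abs t\ge\log 2 > \eps$ keeps us away from the singularity at $t=0$ so all derivatives of $f_j$ are comparable to $f_j(t)$ there. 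Combining (i)--(iii) and tracking the constants should yield the stated remainder $\Theta(39\,\tfrac{\eps^4}{c^2}f_2(t))$; the factor $c^{-2}$ presumably arises because $A_{c,\eps}^2$-type cross terms and the precise size of $A_{c,1}$ contribute a $c^{-1}$ each, or because $\lce = \ell_{c,\eps}(i/2)^{-1}$ carries a $\sinh c/c$-type factor.

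The main obstacle I anticipate is the bookkeeping of the explicit constant: one must bound the fourth derivative of $f_1$ and the second derivatives of $f_2,f_3$ on $\abs t\ge\log 2$ uniformly, control the ratio $f_j(t\pm\theta\eps)/f_j(t)$ for $\abs\theta\le 1$ (using $\eps\le 0.001$ to make this ratio close to $1$), and combine these with sharp estimates on $A_{c,\eps}$ and $\lce$ — all while the worst case is $t$ near $\log 2$ where the $1/t$ factors are largest. There is no conceptual difficulty once the algebraic identity for $f_1''$ is in hand; the risk is purely in verifying that the accumulated constant does not exceed $39$. I would organize the estimate so that the dominant contribution is visibly the $A_{c,\eps}\cdot(f_2-2f_3)''$ convolution error, bound everything else as a small perturbation, and only at the end collect the numerical factors.
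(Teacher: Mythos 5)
Your proposal takes essentially the same route as the paper: Taylor-expand the kernel about the convolution center, use the vanishing of the odd moments of $\eta_{c,\eps}$, exploit the algebraic identity $f_1'' = \tfrac14 f_1 - f_2 + 2f_3$ to cancel the deliberately inserted $A_{c,\eps}(f_2-2f_3)$, and let the normalization $\lambda_{c,\eps}^{-1}$ absorb the surviving $\tfrac14 A_{c,\eps} f_1$. The paper's only organizational difference is that it factors out $e^{-\tau/2}$ first by writing $f_k\ast\eta_{c,\eps}(t) = \lambda_{c,\eps}f_k(t) + f_2(t)\int \eta_{c,\eps}(\tau)g_k(\tau)\,d\tau$ with $g_k(\tau) = e^{-\tau/2}\bigl(t^2(t-\tau)^{-k} - t^{2-k}\bigr)$; since $\lambda_{c,\eps} = \int\eta_{c,\eps}(\tau)e^{-\tau/2}\,d\tau$, the $\tfrac14 f_1$ piece is then swallowed automatically rather than cancelled in a separate step, but this is the same calculation regrouped.

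One point to tighten: you need $A_{c,\eps} = O(\eps^2/c)$, \emph{with} the $c^{-1}$ decay, not merely $O(\eps^2)$, and your tentative formula $A_{c,1} = \tfrac{c}{8}\coth c - \tfrac18$ is wrong (it would grow with $c$). Differentiating the Logan function gives
\[
A_{c,\eps} = -\tfrac12\ell_{c,\eps}''(0) = \frac{\eps^2}{2c}\Bigl(\coth c - \frac1c\Bigr) \leq \frac{\eps^2}{2c}
\quad\text{and}\quad
\ell_{c,\eps}^{(4)}(0) = \frac{9\eps^4}{c^2}\Bigl(\frac{1}{c^2} - \frac{\coth c}{c} + \frac13\Bigr) \leq \frac{3\eps^4}{c^2},
\]
exactly as in \eqref{e:logan-der2} and the line after it. Without the $c^{-1}$ in $A_{c,\eps}$ you would only get a remainder $O(\eps^4)$, not $O(\eps^4/c^2)$. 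You do gesture at this later (``the precise size of $A_{c,1}$ contributes a $c^{-1}$''), which is the correct instinct, but the alternative guess that $\lambda_{c,\eps}^{-1}$ supplies a $\sinh(c)/c$-type factor is a red herring: $\lambda_{c,\eps}^{-1} = 1 + O(\eps^2/c)$. With the correct bound for $A_{c,\eps}$ in hand, your two identified sources of $O(\eps^4/c^2)$ error — the fourth-order Taylor remainder of $f_1$ against $\int\tau^4\eta_{c,\eps}$, and the second-order remainder of $f_2 - 2f_3$ scaled by $A_{c,\eps}$ — are precisely the paper's two $\Theta$-terms, and the explicit constant $39$ comes from bounding the relevant derivatives on $\abs{t}\geq\log 2$, $\abs\tau\leq\eps\leq 10^{-3}$, as you anticipated.
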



\begin{proof}
Let
\[
g_k(\tau) = e^{-\tau/2} \Bigl(\frac{t^2}{(t-\tau)^k}-t^{2-k}\Bigr).
\]
Then we have
\begin{equation}\label{e:fk*nce}
f_k\ast\nce(t) = \int_{-\eps}^\eps \nce(\tau) \frac{e^{\frac{t-\tau}{2}}}{(t-\tau)^k}\,d\tau
= \lambda_{c,\eps}f_k(t) + f_2(t) \int_{-\eps}^\eps \nce(\tau) g_k(\tau)\, d\tau.
\end{equation}
The integral on the right hand side can be expressed in terms of the derivatives $\ell_{c,\eps}^{(n)}(0)$ by expanding $g_k$ in a Taylor series and using the well-known identity
\begin{equation*}
\ell_{c,\eps}^{(n)} (0) = i^n\int_{\eps}^\eps \nce(\tau) \tau^n\, d\tau.
\end{equation*}
 Since $\nce$ is even, we only need to consider even terms. We find
\begin{align*}
g_k(0) &=0, \\
g''_k(\tau) &= e^{-\tau/2}\Bigl[-\frac{t^{2-k}}{4} + \frac{1}{4}\frac{t^2}{(t-\tau)^k} - \frac{kt^{k+1}}{(t-\tau)^2} + \frac{k(k+1)t^2}{(t-\tau)^{k+2}}\Bigr],\\
\intertext{and}
g_1^{(4)}(\tau) &= e^{-\tau/2}\Bigl[\frac 1{16}\frac{t\tau}{t-\tau} - \frac 12 \frac{t^2}{(t-\tau)^2} + \frac{3t^2}{(t-\tau)^3} - \frac{12t^2}{(t-\tau)^4} + \frac{24t^2}{(t-\tau)^5}\Bigr],
\end{align*}
and since we may assume $\abs{t}\geq \log 2$ and $\abs{\tau}\leq \eps \leq 0.001$ we have
\begin{equation*}
\abs{\frac{t}{t-\tau}} \leq \frac{\log 2}{\log 2 - 0.001} \leq 1.002,
\end{equation*}
which gives
\begin{equation*}
\abs{g''_k(\tau)} \leq 1.002^{k+2} e^{0.0005}\Bigl[\frac{1}{2\log(2)^{k-2}} + \frac{k}{\log(2)^{k-1}}+\frac{k(k+1)}{\log(2)^{k}}\Bigr]\leq
\begin{cases}
16.1 & k=2,\\
43.5 & k=3,
\end{cases}
\end{equation*}
 and
\begin{equation*}
\abs{g_1^{(4)}(\tau)} \leq 1.002^5\cdot e^{0.0005}  \Bigl[\frac{1}{16}0.001 + \frac{1}{2} + \frac{3}{\log(2)} + \frac{12}{\log(2)^2} + \frac{24}{\log(2)^3}\Bigr] \leq 103. 
\end{equation*}
We therefore get
\begin{align*}
g_1(\tau) &= \alpha_1\tau + \frac{\tau}{2}\Bigl(\frac{2}{t^2}-\frac{1}{t}\Bigr) + \beta_1 \tau^3 + \Theta(4.3\tau^4), \\
g_2(\tau) &= \alpha_2\tau + \Theta(8.1\tau^2),
\intertext{and}
g_3(\tau) &= \alpha_3 \tau + \Theta(21.8\tau^2),
\end{align*}
 which gives
\begin{equation}\label{e:g_k-int}
\abs{\int_{-\eps}^\eps \nce(\tau) g_k(\tau) \,d\tau} \leq
\begin{cases}
8.1 \,\abs{\ell''_{c,\eps}(0)} & k=2, \\
21.8 \,\abs{\ell''_{c,\eps}(0)} & k = 3,
\end{cases}
\end{equation}
and
\begin{equation}\label{e:g_1-int}
\int_{-\eps}^\eps \nce(\tau) g_1(\tau) \,d\tau = -\frac{\ell''_{c,\eps}(0)}{2}\Bigl(\frac{2}{t^2}-\frac{1}{t}\Bigr) 
+ \Theta(4.3 \, \ell_{c,\eps}^{(4)} (0)).
\end{equation}

Next we estimate the derivatives $\ell''_{c,\eps}(0)$ and $\ell^{(4)}_{c,\eps}(0)$. Since $c\geq 1$, we have
\begin{equation*}
0 \leq \frac{\cosh(c)}{\sinh(c)}-\frac{1}{c} = 1 + \frac{e^{-c}}{\sinh(c)} - \frac{1}{c} \leq 1 + \frac{1}{c}\Bigl(\frac{e^{-1}}{\sinh(1)} - 1\Bigr) \leq 1
\end{equation*}
and since $(-1)^n \ell_{c,\eps}^{(2n)}(0)>0$ we obtain the bounds
\begin{equation}\label{e:logan-der2}
0 < -\ell_{c,\eps}''(0) = \frac{\eps^2}{c}\Bigl(\frac{\cosh(c)}{\sinh(c)} -\frac{1}{c}\Bigr) \leq \frac{\eps^2}{c},
\end{equation}
and
\begin{equation*}
0 < \ell_{c,\eps}^{(4)}(0) 
= \frac{9\eps^4}{c^2}\Bigl(\frac{1}{c^2} - \frac{\cosh(c)}{c\sinh(c)} +\frac{1}{3}\Bigr) 
\leq \frac{3\eps^4}{c^2}.
\end{equation*}

Combining these with the estimates in \eqref{e:g_k-int} and \eqref{e:g_1-int} now gives
\begin{align*}
\phi_{\infty,c,\eps}(t) &= \lce\bigl(f_1 + A_{c,\eps} (f_2-2f_3)\bigr)\ast \nce(t) \\
&= f_1(t)\Bigl[1 + \Theta\Bigl(4.3\frac{3\eps^4}{tc^2}\Bigr) + A_{c,\eps} \Theta\Bigl((8.1+43.6)\frac{\eps^2}{tc}\Bigr)\Bigr] \\
&= f_1(t)\Bigl[1 + \Theta\Bigl(39\frac{\eps^4}{tc^2}\Bigr)\Bigr],
\end{align*}
where we used $\lambda_{c,\eps}>1$.
\end{proof}


Next, we bound the difference $\chi^*_{[\log 2,\log x]}\phi_{\infty,c,\eps} -  \phi_{x,c,\eps}$ in $B_\eps(\log x)$.


\begin{lemma}\label{l:phi-at-logx}
Let $x\geq e^{10}$, $\eps \leq 0.001$, $c\geq 1$, and let
\begin{equation}\label{e:m-def}
m_{x,c,\eps}(t) = \frac{e^{t/2}}{\lambda_{c,\eps} t}\Bigl[\mu_{c,\eps}(y) + \Bigl(\frac{1}{t}-\frac{1}{2}\Bigr)\bigl(y\mu_{c,\eps}(y) - \nu_{c,\eps}(y)\bigr)\Bigr],
\end{equation} 
where $y = t-\log(x)$. Then we have
\begin{equation*}
\phi_{x,c,\eps}(t) = \chi^*_{[1,\log x]}(t)\phi_{\infty,c,\eps}(t) + m_{x,c,\eps}(t) + \Theta\Bigl(0.13\, e^{-\eps/2}\frac{\eps^2\sqrt{x}}{c \log(e^\eps x)}\Bigr)
\end{equation*}
for $\abs{t-\log x} \leq \eps$.
\end{lemma}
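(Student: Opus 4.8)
The plan is to recast the convolution in \eqref{e:def-phi} as an integral over $[-\eps,\eps]$, exploit $\supp\nce\subset[-\eps,\eps]$ to reduce the difference $\phi_{x,c,\eps}-\chi^*_{[1,\log x]}\phi_{\infty,c,\eps}$ to a single short integral, extract $m_{x,c,\eps}$ from a first-order Taylor expansion of the integrand, and control the remainder through the second moment $-\ell_{c,\eps}''(0)\le\eps^2/c$ of \eqref{e:logan-der2}. Concretely, set $h=f_1+\Ace(f_2-2f_3)$ and $y=t-\log x\in[-\eps,\eps]$. For $\abs t>\eps$,
\[
\phi_{x,c,\eps}(t)=\lce\int_{-\eps}^\eps\nce(\tau)\,\chi_{(-\infty,\log x]}(t-\tau)\,h(t-\tau)\,d\tau,
\]
and, letting $x\to\infty$, $\phi_{\infty,c,\eps}(t)=\lce\int_{-\eps}^\eps\nce(\tau)h(t-\tau)\,d\tau$; since $t\ge\log x-\eps>\eps$ throughout, the poles of the $f_k$ at $0$ never enter. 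The cutoff equals $1$ exactly for $\tau\ge y$, while $\chi^*_{[1,\log x]}(t)$ equals $1,\tfrac12,0$ according as $t<\log x$, $t=\log x$, $t>\log x$; comparing the ranges of integration case by case (with $\int_{-\eps}^0\nce=\int_0^\eps\nce=\tfrac12$ in the boundary case) shows that
\[
D(t):=\phi_{x,c,\eps}(t)-\chi^*_{[1,\log x]}(t)\,\phi_{\infty,c,\eps}(t)
\]
always collapses to $D(t)=-\lce\int_{-\eps}^y\nce(\tau)h(t-\tau)\,d\tau$ for $y\le0$ and $D(t)=\lce\int_y^\eps\nce(\tau)h(t-\tau)\,d\tau$ for $y\ge0$.

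Next I would isolate the main term. Write $h(t-\tau)=e^{t/2}q(\tau)$ with $q(\tau)=e^{-\tau/2}\psi(t-\tau)$, $\psi(u)=u^{-1}+\Ace(u^{-2}-2u^{-3})$, and Taylor expand $q(\tau)=q(0)+q'(0)\tau+r(\tau)$, where $q(0)=\psi(t)$, $q'(0)=-\bigl(\tfrac12\psi(t)+\psi'(t)\bigr)$ and $r$ is the second-order remainder. Integrating against $\nce$, using $\int_{-\eps}^y\nce=-\mu_{c,\eps}(y)$ for $y\le0$ and $\int_y^\eps\nce=\mu_{c,\eps}(y)$ for $y\ge0$, one integration by parts for the first moment (which produces $y\mu_{c,\eps}(y)-\nu_{c,\eps}(y)$), and the boundary values $\mu_{c,\eps}(\pm\eps)=\nu_{c,\eps}(\pm\eps)=0$ (immediate from $\supp\nce\subset[-\eps,\eps]$ and the oddness of $\mu_{c,\eps}$), one obtains, in all cases,
\[
D(t)=\lce\,e^{t/2}\Bigl[\psi(t)\mu_{c,\eps}(y)-\bigl(\tfrac12\psi(t)+\psi'(t)\bigr)\bigl(y\mu_{c,\eps}(y)-\nu_{c,\eps}(y)\bigr)\Bigr]+E(t),
\]
with $E(t)$ the contribution of $r$. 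Since $\psi(t)=t^{-1}+\Ace(\cdots)$ and $\tfrac12\psi(t)+\psi'(t)=\tfrac1{2t}-\tfrac1{t^2}+\Ace(\cdots)=-\tfrac1t\bigl(\tfrac1t-\tfrac12\bigr)+\Ace(\cdots)$, the $\Ace$-free part is precisely $m_{x,c,\eps}(t)$, and the $\Ace$-corrections are absorbed into $E(t)$.

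It then remains to bound $E(t)$ on $\abs{t-\log x}\le\eps$. By \eqref{e:logan-der2}, $\Ace=-\tfrac12\ell_{c,\eps}''(0)\in\bigl(0,\tfrac{\eps^2}{2c}\bigr]$; combined with $\abs{\mu_{c,\eps}}\le1$, $\abs{y\mu_{c,\eps}(y)},\abs{\nu_{c,\eps}}\le\eps$ and $t\ge\log x-\eps$, the $\Ace$-corrections are $O\bigl(\eps^2\sqrt x/(c(\log x)^2)\bigr)$, a factor $1/\log x$ below the target. For the $r$-part, using $r(0)=r'(0)=0$, $\abs{r(\tau)}\le\tfrac12\tau^2\sup\abs{q''}$, $\nce\ge0$, and that the remainder integral runs only over $[-\eps,y]\subset[-\eps,0]$ or $[y,\eps]\subset[0,\eps]$, one finds $\abs{E(t)}\le\tfrac14\lce\,e^{t/2}\bigl(\sup\abs{q''}\bigr)\int_{-\eps}^\eps\tau^2\nce(\tau)\,d\tau$, where $\int_{-\eps}^\eps\tau^2\nce(\tau)\,d\tau=-\ell_{c,\eps}''(0)\le\eps^2/c$ by the moment identity $\ell_{c,\eps}^{(n)}(0)=i^n\int\nce\,\tau^n$ and \eqref{e:logan-der2} — and it is this gain of $1/c$ over the trivial $\eps^2$ that is essential. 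Using $\lambda_{c,\eps}>1$, $e^{t/2}\le e^{\eps/2}\sqrt x$, and a direct estimate of $q''(\tau)=e^{-\tau/2}\bigl(\tfrac14\psi(u)+\psi'(u)+\psi''(u)\bigr)$, $u=t-\tau\in[\log x-2\eps,\log x+2\eps]$ (with $\log x\ge10$, $\eps\le10^{-3}$), which gives $\sup\abs{q''}\le c_0/\log x$ with $c_0$ barely above $\tfrac14$, and finally rewriting $\log x$ as $\log(e^\eps x)$ at the cost of $e^{-\eps/2}$ and a slightly larger numerical constant, one arrives at the asserted bound with constant $0.13$.

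I expect the numerical bookkeeping of this last estimate to be the real obstacle. Both the second-order remainder and the jump of $\mu_{c,\eps}$ by $1$ at $y=0$ make $D(t)-m_{x,c,\eps}(t)$ genuinely of order $\eps^2\sqrt x/(c\log x)$ — the same order as the defects $\lambda_{c,\eps}-1=O(\eps^2/c)$ and $\phi_{\infty,c,\eps}-f_1$ (Lemma~\ref{l:phi-infty}) that govern the mismatch of the jumps of $\chi^*_{[1,\log x]}\phi_{\infty,c,\eps}$ and $m_{x,c,\eps}$ at $t=\log x$ — so no estimate can afford to be lossy: one must retain the saving $-\ell_{c,\eps}''(0)\le\eps^2/c$, use that evenness of $\nce$ annihilates the first moment, and carefully track the $e^{\pm\eps/2}$ factors and the exact power of $\log x$ in the denominator.
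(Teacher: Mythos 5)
Your proposal is correct and follows essentially the same route as the paper's proof: reduce the difference $\phi_{x,c,\eps}-\chi^*_{[1,\log x]}\phi_{\infty,c,\eps}$ to a one-sided integral over $[-\eps,y]$ or $[y,\eps]$, Taylor expand the integrand to first order (producing $\mu_{c,\eps}(y)$ from the zeroth moment and, via integration by parts and $\mu_{c,\eps}(\pm\eps)=\nu_{c,\eps}(\pm\eps)=0$, the combination $y\mu_{c,\eps}(y)-\nu_{c,\eps}(y)$ from the first), and bound the quadratic remainder by the second moment $-\ell_{c,\eps}''(0)\le\eps^2/c$ together with $e^{t/2}\le e^{\eps/2}\sqrt x$. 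The only cosmetic differences are that you expand the full $h=f_1+\Ace(f_2-2f_3)$ at once and then split off the $\Ace$-corrections, whereas the paper expands $f_1$ to first order but treats the $f_2,f_3$ pieces by the crude bound $\int_0^\eps\nce=\tfrac12$ (harmless, since they carry the tiny factor $\Ace$), and that you quote $\abs{\mu_{c,\eps}}\le1$ where the sharper $\tfrac12$ is available; neither affects the outcome, and your estimate $\sup\abs{q''}\lesssim\tfrac14(\log x)^{-1}$ together with the factor $\tfrac14$ from the half-interval and $\abs{r}\le\tfrac12\tau^2\sup\abs{q''}$ reproduces, after the $e^{\pm\eps/2}$ and $\log(e^\eps x)$ massaging, a constant comfortably under $0.13$.
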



\begin{proof}
For $t\in B_\eps(\log x)$ we have
\begin{multline}\label{e:fk-si}
(\chi_{[1,\log x]}f_k)\ast \nce(t) 
= \chi^*_{[\log x, \infty)}(t) \int_{y}^\eps \nce(\tau) f_k(t-\tau) \, d\tau \\
+ \chi^*_{[1,\log x]}(t) \Bigl[ f_k\ast\nce(t) - \int_{-\eps}^y \nce(\tau) f_k(t-\tau) \, d\tau\Bigr]
\end{multline}
Since
\[
0<f_k(t) \leq f_k(\log(x) + \eps) \leq e^{\eps/2}\frac{\sqrt{x}}{\log(x)^k}
\]
for such $t$, this gives
\begin{equation*}
(\chi_{[1,\log x]}f_k)\ast \nce(t) = \chi_{[1,\log x]}^*(t)f_k(t) + \Theta\Bigl(\frac{e^{\eps/2}\sqrt{x}}{2\log(x)^2}\Bigr),
\end{equation*}
where we used  $\int_0^\eps \nce(\tau) d\tau = 1/2$ in the $\Theta$-term.

For $k=1$ we further evaluate the integrals in \eqref{e:fk-si}. Since $t\geq 10$, we have
\[
\frac{e^{\frac{t-\tau}{2}}}{t-\tau} 
= f_1(t)\Bigl(1 + \tau\Bigl(\frac{1}{t} - \frac{1}{2}\Bigr) + \Theta(0.13\, \tau^2)\Bigr).
\]
Now first assume $y > 0$. Then we get
\begin{align*}
\int_{y}^{\eps} \nce(\tau) f_1(t-\tau)\,d\tau 
&= \frac{e^{\frac{t}{2}}}{t}\int_{y}^\eps \nce(\tau) (1 + \tau(\tfrac{1}{t} - \tfrac{1}{2}) + \Theta(0.13\tau^2))\,d\tau\\
&=  \frac{e^{\frac{t}{2}}}{t}\Bigl( \mu_{c,\eps}(y) + (\tfrac{1}{t} - \tfrac{1}{2})\int_{y}^\eps \nce(\tau) \tau\,d\tau + \Theta\Bigl( 0.065\frac{\eps^2}{c}\Bigr)\Bigr)\\
&= \frac{e^{\frac{t}{2}}}{t}\Bigl( \mu_{c,\eps}(y) + (\tfrac{1}{t} - \tfrac{1}{2})(y\mu_{c,\eps}(y) - \nu_{c,\eps}(y)) + \Theta\Bigl(0.065\frac{\eps^2}{c}\Bigr)\Bigr)
\end{align*}
for the first integral in \eqref{e:fk-si}, where we used the bound from \eqref{e:logan-der2} again. A similar computation gives
\begin{equation*}
\int_{-\eps}^y \nce(\tau) f_1(t-\tau)\,d\tau = -f_1(t) \Bigl(\mu_{c,\eps}(y) + (\tfrac{1}{t} - \tfrac{1}{2})(y\mu_{c,\eps}(y) - \nu_{c,\eps}(y)) + \Theta\Bigl(0.065\frac{\eps^2}{c}\Bigr)\Bigr)
\end{equation*}
for $y < 0$. This also includes the case $y=0$ since $\mu_{c,\eps}$ and $\nu_{c,\eps}$ are normalized. The assertion now follows from
\[
	e^{\eps/2} \frac{\sqrt{x}}{\log x} \Bigl[0.065 \frac{\eps^2}{c} + A_{c,\eps}\Bigl(\frac{1}{\log(x)} + \frac{2}{\log(x)^2}\Bigr)\Bigr] \leq 0.13\, e^{-\eps/2}\frac{\eps^2\sqrt{x}}{c\log(e^\eps x)}.\qedhere
\]
\end{proof}


\begin{proof}[Proof of Theorem \ref{t:pice-to-pi}]
Let $I=[e^{-\eps}x, e^\eps x]$. By Lemma \ref{l:phi-at-logx} we have
\begin{align}\label{e:proof-start}
 \pce(x) & = \sum_{p^m} \frac{\log p}{p^{m/2}}\phi_{x,c,\eps}(m\log x) \\
& = \sum_{p^m} \frac{\log p}{p^{m/2}} \bigl(\chi^*_{[1,\log x]} \phi_{\infty,c,\eps}\bigr)(m\log p) \notag \\
  &\quad\quad + \sum_{p^m\in I} \frac{\log p}{p^{m/2}}\Bigl( m_{x,c,\eps}(m\log p) + \Theta\Bigl(0.13 \frac{e^{-\eps/2}\eps^2\sqrt{x}}{c\log(e^{\eps} x)}\Bigr)\Bigr).\notag
\end{align}

The sum on the second line of \eqref{e:proof-start} may be written as
\begin{equation*}
 \pi^*(x) + \sum_{p^m} \frac{\log p}{p^{m/2}}\chi^*_{[1,\log x]}(\phi_{\infty,c,\eps} - f_1)(m\log p),
\end{equation*}
and the bound from Lemma \ref{l:phi-infty} gives
\begin{equation*}
\sum_{p^m\leq x}\frac{\log p}{p^{m/2}} \abs{(\phi_{\infty,c,\eps} - f_1)(m\log p)} \leq 39\frac{\eps^4}{c^2}\sum_{p^m\leq x} \frac{1}{m^2\log(p)} 
\leq 39\frac{\eps^4}{c^2} x.
\end{equation*}

Since
\[
 \frac{\log p}{p^{m}} \leq e^{\eps/2} \frac{\log(e^\eps x)}{\sqrt{x}},
\]
for all $p^{m}\in I$, the third line of \eqref{e:proof-start} takes the form
\begin{equation*}
 \sum_{p^m\in I} \frac{1}{m}M_{x,c,\eps}(p^{m}) + \Theta\Bigl(0.13 \frac{\eps^2}{c} \sum_{p^m\in I} \frac{1}{m}\Bigr).
\end{equation*}
It therefore suffices to show
\begin{equation*}
0.13 \frac{\eps^2}{c} \sum_{p^m\in I} \frac{1}{m} \leq \frac{0.57\eps^3 x}{c\log(\eps x)} + \frac{0.13 \eps^2 \log\log(2x^2)}{c}.
\end{equation*}
But the Brun-Titchmarsh inequality, as stated in \cite{MV73}, gives
\begin{equation}\label{e:t1-prime-cont}
0.13\frac{\eps^2}{c} \sum_{p\in I} 1 \leq \frac{0.13\eps^2}{c} \frac{4.01\eps x}{\log(\eps x)}\leq 0.53 \frac{\eps^3 x}{c \log(\eps x)},
\end{equation}
and since $\log(\eps x) \sqrt{x} \leq 10\, e^{-5} x$ for $x\geq e^{10}$, Lemma \ref{l:ppow-bound} below gives
\begin{equation*}
0.13\frac{\eps^2}{c} \sum_{\substack{p^m\in I}\\m\geq 2} \frac 1m \leq 0.04 \frac{\eps^3 x}{c \log(\eps x)} + \frac{0.13 \eps^2\log\log(2x^2)}{c},
\end{equation*}
which yields the assertion.
\end{proof}

\begin{lemma}\label{l:ppow-bound}
Let $x\geq 100$, $\eps\leq \frac 1{100}$ and let $I=[e^{-\eps}x,e^\eps x]$. Then we have
\begin{equation*}
\sum_{\substack{p^m\in I\\ m\geq 2}} \frac{1}{m} \leq 4.01 \eps \sqrt x + \log\log(2x^2).
\end{equation*}
\end{lemma}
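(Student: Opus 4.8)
The plan is to group the sum by the exponent $m$ and, for each fixed $m$, to bound the number of primes $p$ with $p^m\in I$ by the number of \emph{integers} in the corresponding short interval. Since $p\ge 2$ forces $2^m\le p^m\le e^\eps x$, only the range $2\le m\le M$ with $M:=\lfloor\log_2(e^\eps x)\rfloor$ contributes, so
\[
\sum_{\substack{p^m\in I\\ m\ge 2}}\frac 1m=\sum_{m=2}^{M}\frac{\pi_m}{m},\qquad \pi_m:=\#\{\,p\text{ prime}: e^{-\eps/m}x^{1/m}\le p\le e^{\eps/m}x^{1/m}\,\}.
\]
The interval defining $\pi_m$ is closed, of length $2x^{1/m}\sinh(\eps/m)$, so $\pi_m$ is at most this length plus $1$; since $u\mapsto\sinh(u)/u$ is increasing and $\eps/m\le\eps\le\tfrac1{100}$, one has $2\sinh(\eps/m)\le\tfrac{2.01\,\eps}{m}$, hence $\pi_m\le\tfrac{2.01\,\eps}{m}x^{1/m}+1$.

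Inserting this estimate splits the sum into the two pieces $2.01\,\eps\sum_{m=2}^{M}x^{1/m}/m^2$ and $\sum_{m=2}^{M}1/m$, which I would treat separately. For the first piece I would isolate the term $m=2$, which contributes $\tfrac14\sqrt x$, and bound the remaining tail by $x^{1/3}\sum_{m\ge 3}m^{-2}=x^{1/3}\bigl(\zeta(2)-\tfrac54\bigr)$; using $x\ge 100$, so that $x^{1/3}\le 100^{-1/6}\sqrt x$, this piece comes out well below $\eps\sqrt x$, and hence comfortably below the claimed $4.01\,\eps\sqrt x$ (the stated constant is far from sharp). For the second piece the comparison $\sum_{m=2}^{M}\tfrac1m<\int_1^{M}\tfrac{dt}{t}=\log M$ together with $M\le\log_2(e^\eps x)$ reduces matters to the inequality $\log_2(e^\eps x)\le\log(2x^2)$, i.e.\ $\eps+\log x\le(\log 2)^2+2\log 2\cdot\log x$, which holds trivially for $x\ge 100$ and $\eps\le\tfrac1{100}$. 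Adding the two estimates gives the assertion.

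The computation is elementary throughout; the one point deserving care is that the $+1$ in $\pi_m\le(\text{length})+1$ — unavoidable since for large $m$ the interval is shorter than $1$ yet may still contain a prime — gets summed over the $O(\log x)$ admissible values of $m$, and it is exactly this accumulation that produces the $\log\log(2x^2)$ term. Bounding $\sum_{m=2}^{M}1/m$ cleanly against $\log\log(2x^2)$, via the integral comparison and the relation between $M$ and $\log(2x^2)$, is therefore the main (if modest) obstacle.
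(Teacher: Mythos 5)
Your proposal is correct and follows essentially the same route as the paper: fix the exponent $m$, bound the number of primes with $p^m\in I$ by the number of integers in the corresponding short interval (length plus $1$), and observe that the accumulated $+1$'s over $2\le m\le M$ produce the $\log\log$ term via $\sum_{m\le M}1/m\le\log M\le\log\log(2x^2)$. The only difference is cosmetic sharpening: you retain $x^{1/m}$ in the per-$m$ count (and use $2\sinh(\eps/m)\le 2.01\,\eps/m$ directly), while the paper uniformly replaces $x^{1/m}$ by $\sqrt x$ after a Taylor-type bound $X^{1/m}-(X-Y)^{1/m}\le 2\tfrac{Y}{m}X^{1/m-1}$, which is why it ends up with the constant $4.01$; both land comfortably within the stated bound.
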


\begin{proof}
Let $0<2Y<X$. Then
\begin{align*}
(X-Y)^{1/m} &\geq X^{1/m} - \frac{Y}{m}X^{1/m-1} - \frac{Y^2}{2m}\Bigl(1-\frac{1}{m}\Bigr)(X-Y)^{1/m-2} \\
 &\geq X^{1/m} - 2\frac{Y}{m}X^{1/m-1}
\end{align*}
for $m>1$, so we get
\begin{align*}
\#\{p\mid p^m\in [X,X-Y]\} &\leq X^{1/m}-(X-Y)^{1/m} + 1 \notag \\
&\leq  2 \frac{Y}{m}X^{1/m-1} + 1.
\end{align*}
For $X=e^\eps x$, $Y=2\sinh(\eps)x$ and $m\geq 2$ this is bounded by
\[
\frac{4.01}{m} \sqrt{x} + 1,
\]
and we get
\begin{equation*}
\sum_{\substack{p^m\in I\\m\geq 2}} \frac{1}{m} \leq  \int_{1}^{2\log(2x)} \frac{4.01 \eps \sqrt{x}}{t^2} + \frac{1}{t}\, dt
\leq 4.01 \eps \sqrt{x} + \log\log(2x^2).\qedhere
\end{equation*}
\end{proof}

\section{The explicit formula for $\pi^*_{c,\eps}(x)$}\label{s:exp-form}

\begin{theorem}\label{t:exp-form}
 Let $x>30000$, $c\geq 1$, and let $0<\eps\leq 0.01$. For $z\in \C\setminus [0,\infty)$ define the function
\begin{equation*}
 \Psi_{x,c,\eps}(z) = \lce \Bigl(\E_1(z\log x) + A_{c,\eps}\bigl(z \E_2(z\log x) - 2z^2\E_3(z\log x)\bigr)\Bigr)\ell_{c,\eps}\bigl(\tfrac{z}{i}- \tfrac{1}{2i}\bigr),
\end{equation*}
where 
\begin{equation}\label{e:Eik-def}
\E_k(\xi)  = \int_{0}^\infty \frac{e^{\xi-t}}{(\xi-t)^k}\, dt
\end{equation}
for $\xi\in \C\setminus [0,\infty)$.
Then we have
\begin{equation*}
 \pi_{c,\eps}^*(x) = \li(x) +\frac{ \Ace \, x}{\log(x)^2} -  \zsum_\rho \Psi_{x,c,\eps}(\rho) 
 - \log(2) + \int_x^\infty\frac{dt}{t\log t(t^2-1)} + \Theta(35\eps).
\end{equation*}
\end{theorem}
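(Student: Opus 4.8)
**Proof proposal for Theorem 3.3 (the explicit formula for $\pi^*_{c,\eps}(x)$).**

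The plan is to apply the Weil–Barner explicit formula \eqref{e:WB-exp-form} to the test function $g = \phi_{x,c,\eps}$ and show that the three pieces $w_s(\hat g)$, $w_f(g)$, $w_\infty(g)$ reproduce, respectively, the sum over zeros $-\zsum_\rho\Psi_{x,c,\eps}(\rho)$, the sum $\pi^*_{c,\eps}(x)$ defining the method (up to the trivial-zero terms), and the main terms $\li(x) + \Ace x/\log(x)^2 - \log 2 + \int_x^\infty dt/(t\log t(t^2-1))$, with everything else absorbed into $\Theta(35\eps)$.

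First I would verify that $\phi_{x,c,\eps}$ lies in the Barner class (B\ref{en:BV})–(B\ref{en:normalized}): the convolution with $\nce$, which is supported in $[-\eps,\eps]$ and smooth on the interior, smooths out the jump of $\chi_{(-\infty,\log x]}$ at $\log x$, so the only issue is the behavior of $f_1 + A_{c,\eps}(f_2 - 2f_3)$ near $0$ — but $\phi_{x,c,\eps}$ is only defined for $\abs t > \eps$ and the convolution kernel's support keeps us away from the singularity at $t=0$, and $e^{t/2}/t^k$ decays like $e^{-\abs t/2}$ as $t\to -\infty$ against the exponential weight $e^{(1/2+h)\abs t}$ is the delicate point; the factor $\lce$ and the finite support of $\nce$ make $\phi_{x,c,\eps}(t)$ vanish identically for $t$ below $-\eps$ after the $f_k$ are truncated to $(-\infty,\log x]$... actually the relevant decay is for $t\to +\infty$ where $\chi_{(-\infty,\log x]}$ truncates, so $\hat g$ is entire and the spectral side converges. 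I would make this rigorous by noting $\phi_{x,c,\eps}$ is compactly supported to the right and has the required local regularity, so (B1)–(B3) hold.

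Next, the identification of the three sides. For $w_f(g)$: by \eqref{d:wf} and the evenness considerations, $w_f(\phi_{x,c,\eps}) = -\sum_{p^m}\frac{\log p}{p^{m/2}}(\phi_{x,c,\eps}(m\log p) + \phi_{x,c,\eps}(-m\log p))$; since $m\log p > 0$ and $\phi_{x,c,\eps}$ built from $\chi_{(-\infty,\log x]}$ convolved with a kernel supported in $[-\eps,\eps]$ vanishes at the negative arguments $-m\log p$ (as $-m\log p < -\eps$ once $p^m \geq e^\eps$, and the smallest prime power exceeds $e^\eps$ for $\eps\leq 0.01$), this is just $-\pi^*_{c,\eps}(x)$ by \eqref{e:mod-pi}. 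For $w_s(\hat g)$: one computes $\hat\phi_{x,c,\eps}(\rho/i - 1/(2i))$. Writing $\phi_{x,c,\eps} = \lce\bigl(\chi_{(-\infty,\log x]}(f_1 + A_{c,\eps}(f_2 - 2f_3))\bigr)\ast\nce$ and using $\widehat{F\ast\nce} = \hat F\cdot\ell_{c,\eps}$ together with $\widehat{\chi_{(-\infty,\log x]}f_k}(z/i - 1/(2i)) = \E_k(z\log x)$ (after shifting the contour and matching with \eqref{e:Eik-def}), one recovers exactly $\Psi_{x,c,\eps}(z)$ up to the $z$-factors on the $\E_2,\E_3$ terms coming from the normalization, and the $-\hat g(\pm i/2)$ terms contribute the trivial-zero/pole corrections. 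For $w_\infty(g)$: plug $\phi_{x,c,\eps}$ into \eqref{d:w8}; the $(\Digamma(1/4) - \log\pi)g(0)$ term and the integral, after using $g(t) + g(-t) - 2g(0)$ and the near-origin expansion, should combine with the residue-type contributions to produce $\li(x) + \Ace x/\log(x)^2 - \log 2 + \int_x^\infty dt/(t\log t(t^2-1))$; the $\Ace x/\log(x)^2$ term is precisely the contribution of the $A_{c,\eps}(f_2 - 2f_3)$ correction evaluated at the main pole, since $\li(x)$ with a second-order correction has the form $x/\log x + x/\log^2 x + \dots$.

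The main obstacle I expect is the careful bookkeeping of the $\Theta(35\eps)$ error: this requires controlling (i) the difference between $\hat g$ evaluated on the critical line and the "ideal" transform that would give $\li(x)$ exactly, coming from the $\lce$ normalization and the fact that $\ell_{c,\eps}$ is not identically $1$ on $[-1,1]$; (ii) the archimedean integral's deviation when $g$ is replaced by its principal part near $t=0$; and (iii) the small shift of $\log x$ caused by convolving with $\nce$ — the jump of $\chi_{(-\infty,\log x]}$ is smeared over $[\log x - \eps, \log x + \eps]$, and the resulting $O(\eps)$ displacement of the effective upper limit in $\li$ must be shown to contribute at most $\Theta(35\eps)$, uniformly for $x > 30000$. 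I would handle (iii) by the same kind of Taylor expansion of $e^{t/2}/t$ near $t = \log x$ used in Lemma \ref{l:phi-at-logx}, and (i)–(ii) by the bounds $0 < -\ell''_{c,\eps}(0) \leq \eps^2/c$ and the growth estimates on $f_k$ established in the proof of Lemma \ref{l:phi-infty}, then summing the pieces and verifying the constant $35$ is not exceeded.
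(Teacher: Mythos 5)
Your overall strategy (apply the Weil--Barner formula and match the three pieces to the terms of the theorem) is the right one and is what the paper does, but the execution has several genuine gaps, the most serious being that you cannot apply the formula directly to $g=\phi_{x,c,\eps}$.

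First, $\phi_{x,c,\eps}$ is not a Barner-class test function, and indeed is not even defined on $[-\eps,\eps]$: for $\abs t\leq\eps$ the convolution $\int_{-\eps}^\eps\nce(\tau)\,\chi_{(-\infty,\log x]}(t-\tau)\bigl(f_1+A_{c,\eps}(f_2-2f_3)\bigr)(t-\tau)\,d\tau$ hits the non-integrable singularity of $f_3(u)=e^{u/2}/u^3$ at $u=0$. The paper avoids this by applying the formula to the regularized and symmetrized function $H_{\delta,x,c,\eps}=\lce(F_{\delta,x}+A_{c,\eps}G_{\delta,x})\ast\nce$, where $F_{\delta,x}$, $G_{\delta,x}$ are truncated at $\log\delta$ with $\delta<1/x$, and only then lets $\delta\searrow0$. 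The divergent terms $\log\abs{\log\delta}$ that this produces in $w_s$ and $w_f$ cancel against each other in the limit --- this cancellation is essential and entirely absent from your sketch.

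Second, your identification $w_f(\phi_{x,c,\eps})=-\pi^*_{c,\eps}(x)$ rests on the claim that $\phi_{x,c,\eps}(-m\log p)=0$, which is false: $\chi_{(-\infty,\log x]}(u)=1$ for all $u<0$, so $\phi_{x,c,\eps}(t)$ does not vanish for $t<-\eps$; rather it equals $\phi_{\infty,c,\eps}(t)=f_1(t)+O(\eps^4/c^2\cdot f_2(t))$ by Lemma \ref{l:phi-infty}. The paper's Lemma \ref{l:wf} accounts for exactly these non-zero negative-argument contributions, comparing $w_f(H_{\delta,x,c,\eps})$ with $w_f(F_{\delta,x})$ and invoking \cite[Lemma 3.4]{BFJK13}, and the resulting error is the $\Theta(440\eps^4/c^2)$ term.

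Third, you misattribute where the main terms come from. In the paper's decomposition, $\li(x)$ and $\Ace x/\log(x)^2$ arise from the pole terms $-\hat g(i/2)-\hat g(-i/2)$ inside $w_s$ (Lemma \ref{l:ws}), while $w_\infty$ produces $\int_x^\infty dt/(t\log t(t^2-1))-\gamma-\log\log x-\log 2$ (Lemma \ref{l:w8}). The constants $\gamma$ and $\log\log x$ then cancel across $w_s$, $w_f$, $w_\infty$; your sketch has $\li(x)$ coming out of $w_\infty$ and does not track these cancellations, so it cannot close. Finally, the bound $\Theta(35\eps)$ requires the explicit estimates of Lemmas \ref{l:ws}--\ref{l:w8} (in particular the careful bounds $\eqref{e:int1}$--$\eqref{e:int6}$ on the archimedean integral); acknowledging these as ``bookkeeping'' is not enough to justify the stated constant.
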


\begin{remark}
Since $\E_k(\rho\log x) \sim x^{\rho}/(\rho\log x)^k$ for $\abs{\Im(\rho)}\to \infty$, one
would expect 
\begin{equation*}
\sum_{\abs{\Im(\rho)}>c/\eps} \abs{\Psi_{x,c,\eps}(\rho)} \approx \frac{\sqrt{x}}{\pi \log x}\log\Bigl(\frac{c}{2\pi\eps}\Bigr)\int_{\abs{t}>c} \abs{\frac{\ell_c(t)}{t}}\, dt,
\end{equation*}
if the RH is assumed, where the optimality property \eqref{e:logan-opt} of the Logan function gets into play. Unconditionally, one would still expect the right hand side to give an upper bound 
if $\sqrt{x}$ is replaced by $x$. In Theorem \ref{t:zsum-bounds} we will prove bounds that are almost of this quality.
\end{remark}

\begin{proof}
For $0<\delta<1/x$ let 
\begin{equation*}
 f_{\delta,x} = \chi^*_{(\log\delta,\log x)} f_1, \eand g_{\delta,x} = \chi^*_{(\log\delta,\log x)}(f_2-2f_3),
\end{equation*}
and let  
\begin{equation*}
 F_{\delta,x}(t) = \half(f_{\delta,x}(t) + f_{\delta,x}(-t)), \eand G_{\delta,x}(t) = \half(g_{\delta,x}(t) + g_{\delta,x}(-t)).
\end{equation*}
We will prove the theorem by applying the Weil-Barner formula \eqref{e:WB-exp-form} to the function
\begin{equation*}
 H_{\delta,x,c,\eps} = \lce(F_{\delta,x} + A_{c,\eps} G_{\delta,x})\ast \nce
\end{equation*}
and taking the limit $\delta\searrow 0$.%

\begin{lemma}\label{l:ws}
 Let $x>30000$. Then we have
\begin{multline*}
 \lim_{\delta\searrow 0}\bigl( w_s(\F H_{\delta,x,c,\eps}) - \log\abs{\log\delta}\bigr)= -\li(x)  - \log\log(x) -\Ace\frac{x}{\log(x)^2} \\+\zsum_\rho \Psi_{x,c,\eps}(\rho) + \Theta\left(\frac{3.2\Ace}{\log x}\right).
\end{multline*}
\end{lemma}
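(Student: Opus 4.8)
The plan is to evaluate the spectral side $w_s(\widehat{H_{\delta,x,c,\eps}})$ by unwinding the definition \eqref{d:ws} in terms of the Fourier transform of $H_{\delta,x,c,\eps}$. Since $H_{\delta,x,c,\eps} = \lce(\Fdx + A_{c,\eps}\Gdx)\ast\nce$, its Fourier transform factors as $\widehat{H_{\delta,x,c,\eps}}(\xi) = \lce\,\widehat{(\Fdx + A_{c,\eps}\Gdx)}(\xi)\,\ell_{c,\eps}(\xi)$, because $\widehat{\nce} = \ell_{c,\eps}$ and convolution goes to a product under Fourier transform. The first task is therefore to compute $\widehat{\Fdx}$ and $\widehat{\Gdx}$ explicitly. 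Because $\Fdx$ is the even symmetrization of $\chi^*_{(\log\delta,\log x)}f_1$ with $f_1(t)=e^{t/2}/t$, its Fourier transform at $\xi = \rho/i - 1/(2i)$ should reproduce, up to the symmetrization, the incomplete exponential-integral-type function $\E_1(\rho\log x)$ minus a piece depending on $\log\delta$; similarly $\widehat{\Gdx}$ will bring in the $z\E_2 - 2z^2\E_3$ combination. Concretely I would write $\widehat{f_{\delta,x}}(\xi) = \int_{\log\delta}^{\log x} e^{t/2}e^{i\xi t}/t\,dt$, substitute and recognize this against the definition \eqref{e:Eik-def} of $\E_k$, being careful about the branch of the integrand in $\C\setminus[0,\infty)$ and about the normalized characteristic function convention at the endpoints.

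Next I would assemble these pieces into $w_s(\widehat{H_{\delta,x,c,\eps}}) = \zsum_\rho \widehat{H_{\delta,x,c,\eps}}(\rho/i - 1/(2i)) - \widehat{H_{\delta,x,c,\eps}}(i/2) - \widehat{H_{\delta,x,c,\eps}}(-i/2)$. The sum over zeros, after inserting the product formula for the Fourier transform, should be exactly $\zsum_\rho \Psi_{x,c,\eps}(\rho)$ up to the $\log\delta$-dependent terms: note that $\Psi_{x,c,\eps}(z)$ is precisely $\lce(\E_1(z\log x) + A_{c,\eps}(z\E_2 - 2z^2\E_3))\,\ell_{c,\eps}(z/i - 1/(2i))$, so the main term matches once the symmetrization and the limit $\delta\searrow0$ are handled. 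The divergent part is localized: as $\delta\searrow0$, $\int_{\log\delta}$ contributes a term behaving like $\log|\log\delta|$ (coming from the $1/t$ singularity structure near the lower limit, integrated against the slowly varying kernel), which is why the lemma subtracts $\log|\log\delta|$ before taking the limit. I would isolate this by splitting $\widehat{f_{\delta,x}}$ at a fixed cutoff and tracking only the $\delta$-dependent tail, showing it equals $\log|\log\delta|$ plus a convergent remainder as $\delta\searrow0$; the $-\li(x)$ and $-\log\log(x)$ terms then emerge from the $\log x$ endpoint and the value of the auxiliary $\E_k$-integrals at the relevant arguments, while $-\Ace\, x/\log(x)^2$ comes from the $A_{c,\eps}\Gdx$ contribution evaluated near $t=\log x$.

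The two extra terms $-\widehat{H_{\delta,x,c,\eps}}(\pm i/2)$ require care: at $\xi = \pm i/2$ the factor $e^{t/2}e^{i\xi t}$ becomes $e^{t/2 \mp t/2}$, so one of them is bounded (the $e^{t/2-t/2}=1$ integrand gives $\int dt/t$ over the relevant range, producing $\log\log x$-type contributions) and the other grows like $\sqrt{x}$ but is multiplied by $\ell_{c,\eps}$ evaluated away from the spectrum, which I expect to combine with the $A_{c,\eps}$ pieces to yield the stated $\Theta(3.2\Ace/\log x)$ error — this is where the explicit constant must be pinned down, using the bounds on $A_{c,\eps} = -\ell''_{c,\eps}(0)/2 \le \eps^2/(2c)$ from \eqref{e:logan-der2} and elementary estimates on $\E_k$ at real arguments near $\log x$. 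I expect the main obstacle to be the bookkeeping around the branch cut and the normalized characteristic function: making sure that the symmetrization $F_{\delta,x}(t) = \tfrac12(f_{\delta,x}(t)+f_{\delta,x}(-t))$, the principal-value conventions built into $\Ei$ and $\E_1$, and the $\chi^*$ at the endpoints $t=\log\delta, \log x$ all fit together so that the limit $\delta\searrow0$ produces exactly $\log|\log\delta| + (\text{the clean expression})$ with no leftover constant. The analytic content — that the Fourier transform of a truncated $e^{t/2}/t^k$ is an incomplete $\E_k$ — is routine once set up; the delicate part is controlling the error uniformly and extracting the precise constant $3.2$, which will force me to estimate $\ell_{c,\eps}(z/i-1/(2i))$ and its derivatives on a neighbourhood of the relevant points and to bound the tail integrals $\int_{\log x}^\infty$ and $\int_0^{\log\delta^{-1}}$ with explicit numerical care.
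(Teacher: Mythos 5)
Your overall plan is the same as the paper's: factor $\widehat{H_{\delta,x,c,\eps}}(\xi) = \lce\,\widehat{(\Fdx + A_{c,\eps}\Gdx)}(\xi)\,\ell_{c,\eps}(\xi)$, identify the Fourier transforms of the truncated $e^{t/2}/t^k$ with the $\E_k$ functions, recognize the sum over zeros as $\zsum_\rho\Psi_{x,c,\eps}(\rho)$ using the symmetry $\rho\mapsto 1-\rho$, and account for the remaining terms via the evaluations at $\xi=\pm i/2$. However, your account of the $\pm i/2$ pieces contains errors that would lead you astray, and you leave one necessary step unaddressed.

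First, the $\pm i/2$ evaluations are not small and are not damped by $\ell_{c,\eps}$. Since $\ell_{c,\eps}(\pm i/2)=\lambda_{c,\eps}$, we have $\lce\,\ell_{c,\eps}(\pm i/2)=1$ exactly, so there is no suppression at all; these terms contribute the \emph{main} terms $-\li(x)$, $-\log\log x$, and $-\Ace x/\log(x)^2$ directly. Concretely, $\F f_{\delta,x}(i/2)=\mathrm{p.v.}\!\int_{\log\delta}^{\log x}\tfrac{dt}{t}=\log\log x-\log\abs{\log\delta}$ (this is the sole source of the $\log\abs{\log\delta}$ divergence, not a ``slowly varying kernel'' effect) and $\F f_{\delta,x}(-i/2)=\int_{\log\delta}^{\log x}\tfrac{e^t}{t}\,dt\to\li(x)$, which grows like $x/\log x$, not $\sqrt{x}$. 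Likewise the $A_{c,\eps}\Gdx$ evaluation produces the main term $\tfrac{x}{\log(x)^2}$ (from $\int_{-\log x}^{\log x}\bigl(\tfrac{e^t-1}{t^2}-2\tfrac{e^t-1}{t^3}\bigr)dt$), and the $\Theta(3.2\Ace/\log x)$ error arises from the tail $\int_{\log x}^\infty$ of that integral together with the defect from the finite truncation — not, as you suggest, from any smallness of $\ell_{c,\eps}$ ``away from the spectrum'' combining with the $\li(x)$ term. Second, you never justify interchanging $\lim_{\delta\searrow 0}$ with $\zsum_\rho$; the paper proves a uniform bound $\abs{\F B_\delta(\xi)}<C/\abs{\Re\xi}$ via integration by parts to dominate the sum, and without some such estimate the interchange is not automatic. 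Finally, for the zeros part you should be explicit that the integrals defining $\F F_{0,x}$ and $\F G_{0,x}$ at real nonzero $\xi$ need a regularization near $t=0$ (the paper shifts the contour by $\pm ir$ and lets $r\searrow 0$); merely citing ``the normalized characteristic function convention'' does not dispose of the branch issue, because the $1/t^k$ singularity sits in the interior of the integration range.
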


\begin{proof}
We start by investigating the sum over zeros in $w_s(\F H_{\delta,x,c,\eps})$. Let $B_\delta\in\{F_{\delta,x},G_{\delta,x}\}$. Then partial integration shows $\F B_\delta(\xi) < C/\abs{\Re(\xi)}$ for $\abs{\Re(\xi)}>1$, $\abs{\Im(\xi)} \leq 1/2$ and $\delta$ sufficiently small. Furthermore, we have $\lim_{\delta\searrow 0} \hat B_\delta(\xi) = \hat B_0(\xi)$ for such $\xi$, where $B_0 = \lim_{\delta\searrow 0} B_\delta$. Therefore, we get
\begin{equation*}
\lim_{\delta\searrow 0} \zsum_\rho (\F B_\delta\cdot \ell_{c,\eps}) \Bigl(\frac{\rho-1/2}{i}\Bigr) = \zsum_\rho(\F B_0 \cdot \ell_{c,\eps}) \Bigl(\frac{\rho-1/2}{i}\Bigr).
\end{equation*}
\end{proof}

For $\Im(\xi)\neq 0$, the Fourier transforms of $F_{\delta,x}$ and $G_{\delta,x}$ may be expressed in terms of the functions $ \E_k$ defined in \eqref{e:Eik-def}. Using the abbreviation $z = 1/2+i\xi$ and $\tilde z = 1/2-i\xi$ we have
\begin{equation}\label{e:Fx-hat}
\F F_{0,x}(\xi) = \half \bigl(\E_1(z\log x) + \E_1(\tilde z\log x)\bigr),
\end{equation}
and
\begin{equation}\label{e:Gx-hat}
 \F G_{0,x}(\xi) = \frac{z}{2} \E_2(z\log x) - z^2\E_3(z\log x) + \frac{\tilde z}{2}\E_2(\tilde z\log x) - \tilde{z}^2\E_3(\tilde z\log x),
\end{equation}
which can be seen as follows. Define $g(t) = (f_2(t)-2f_3(t))(e^{i\xi t} + e^{-i\xi t})/2$, then we clearly have
\begin{equation*}
\F G_{0,x}(\xi) = \lim_{\eps\searrow 0} \Bigl(\int_{-\infty}^\eps + \int_{\eps}^{\log x}\Bigr) g(t) \, dt = \int_{-\infty}^{\log x} g(t+ir) + g(t-ir) \, dt + O(r).
\end{equation*}
On the other hand, the substitution $\tau = z(\log(x) - t)$ and a suitable modification of the resulting path of integration gives 
\begin{align*}
\int_{-\infty}^{\log(x)} \frac{e^{z(t\pm ir)}}{(t\pm ir)^k}\, dt &= z^{k-1} \int_{0}^\infty \frac{e^{z(\log(x)\pm ir) - \tau}}{(z(\log(x)\pm ir) - \tau)^k}\, d\tau \\
&= z^{k-1} \tilde \Ei_k( z(\log(x) \pm ir)).
\end{align*}
So the limit $r \searrow 0$ yields \eqref{e:Gx-hat}, and \eqref{e:Fx-hat} follows in a similar way. Now, since $\rho\mapsto 1-\rho$ is a bijection of the non-trivial zeros of the Riemann zeta function, \eqref{e:Fx-hat} and \eqref{e:Gx-hat} together imply
\[
\lim_{\delta\searrow 0} \zsum_\rho \F H_{\delta,x,c,\eps} \Bigl(\frac{\rho-1/2}{i}\Bigr) = \zsum_{\rho} \Psi_{x,c,\eps}(\rho).
\]

It remains to evaluate the term $\F H_{\delta,x,c,\eps}(i/2) + \F H_{\delta,x,c,\eps}(-i/2)$. We have
\begin{equation*}
 \Fdx(i/2) + \Fdx(-i/2) = \li(x) + \log\log(x) - \log\abs{\log \delta} + O(\delta) 
\end{equation*}
and
\begin{multline}\label{e:G-poles}
 \Gdx(i/2) + \Gdx(-i/2) = \int_{-\log x}^{\log x} \frac{e^t-1}{t^2} - 2\frac{e^t-1}{t^3}\,dt \\
 + \Theta\left(\int_{\log x}^\infty \Bigl(\frac{1}{t^2}+\frac{2}{t^3}\Bigr)(1+e^{-t})\right) + O\bigl(\abs{\log\delta}^{-1}\bigr),
\end{multline}
where the first integral on the right hand side of \eqref{e:G-poles} equals 
\[
 \left[\frac{e^t+t-1}{t^2}\right]_{-\log x}^{\log x} = \frac{x}{\log(x)^2} + \Theta\Bigl(\frac{2.1}{\log x}\Bigr),
\]
and where the integral in the $\Theta$-term is bounded by $1.1/\log(x)$. We therefore get
\begin{equation*}
\lim_{\delta\searrow 0} \Bigl(  \Gdx(i/2) + \Gdx(-i/2) \Bigr) = \frac{x}{\log(x)^2} + \Theta\Bigl(\frac{3.2}{\log x}\Bigr),
\end{equation*}
which yields the assertion.


\begin{lemma}\label{l:wf}
 Let $x>1$, $\eps\leq 0.01$ and $c\geq 1$. Then we have
\begin{equation*}
 \lim_{\delta\searrow 0}\bigl( w_f(H_{\delta,x,c,\eps}) - \log\abs{\log\delta}\bigr) = \gamma - \pi_{c,\eps}^*(x) + \Theta\Bigl(440 \frac {\eps^4}{ c^2}\Bigr).
\end{equation*}
\end{lemma}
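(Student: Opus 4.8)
The strategy is to exploit the evenness of $H_{\delta,x,c,\eps}$, which holds because $F_{\delta,x}$, $G_{\delta,x}$ and $\nce$ are all even, to write
\[
w_f(H_{\delta,x,c,\eps})=-\sum_{p^m}\frac{\log p}{p^{m/2}}\,2H_{\delta,x,c,\eps}(m\log p),
\]
and then to split the summand. Writing $\check h(t):=h(-t)$, the identities $2F_{\delta,x}=f_{\delta,x}+\check f_{\delta,x}$ and $2G_{\delta,x}=g_{\delta,x}+\check g_{\delta,x}$ give
\[
2H_{\delta,x,c,\eps}=\lce\bigl((f_{\delta,x}+A_{c,\eps}g_{\delta,x})\ast\nce\bigr)+\lce\bigl((\check f_{\delta,x}+A_{c,\eps}\check g_{\delta,x})\ast\nce\bigr),
\]
so $w_f(H_{\delta,x,c,\eps})=-\Sigma_1(\delta)-\Sigma_2(\delta)$ with $\Sigma_1$, $\Sigma_2$ coming from the two summands. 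I expect $\Sigma_1$ to reproduce $\pi^*_{c,\eps}(x)$ exactly and $\Sigma_2$ to supply the term $\gamma$ together with the divergence that cancels $\log\abs{\log\delta}$.

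For $\Sigma_1$: every prime power has $m\log p\geq\log 2>\eps$, so the argument $m\log p-\tau$ of $f_{\delta,x}=\chi^*_{(\log\delta,\log x)}f_1$ is positive, hence above $\log\delta$ for all $\delta<1$, whence $\chi^*_{(\log\delta,\log x)}(m\log p-\tau)=\chi^*_{(-\infty,\log x]}(m\log p-\tau)$. Thus $\lce\bigl((f_{\delta,x}+A_{c,\eps}g_{\delta,x})\ast\nce\bigr)(m\log p)=\phi_{x,c,\eps}(m\log p)$ for every $\delta\in(0,1)$, and since this vanishes for $m\log p>\log x+\eps$ the sum is finite; therefore $\Sigma_1(\delta)=\pi^*_{c,\eps}(x)$ identically in $\delta$.

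For $\Sigma_2$ the argument $\tau-m\log p$ of $f_{\delta,x}$ is negative, so $\chi^*_{(\log\delta,\log x)}(\tau-m\log p)$ equals $1$ when $m\log p<\abs{\log\delta}-\eps$, equals $0$ when $m\log p>\abs{\log\delta}+\eps$, and interpolates in between. In the first range, evenness of $\nce$ turns $\lce\bigl((\check f_{\delta,x}+A_{c,\eps}\check g_{\delta,x})\ast\nce\bigr)(m\log p)$ into $\lce\bigl((f_1+A_{c,\eps}(f_2-2f_3))\ast\nce\bigr)(-m\log p)=\phi_{\infty,c,\eps}(-m\log p)$, which is legitimate and estimable because $\abs{-m\log p}\geq\log 2$. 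The interpolating range $\abs{\log\delta}-\eps\leq m\log p\leq\abs{\log\delta}+\eps$ contributes $O\bigl(\sum_{\delta^{-1}e^{-\eps}<p^m<\delta^{-1}e^{\eps}}(mp^m)^{-1}\bigr)=o(1)$ as $\delta\searrow 0$, since the primes in so short a window have reciprocal mass $O\bigl(\log\log(\delta^{-1}e^{\eps})-\log\log(\delta^{-1}e^{-\eps})\bigr)$. Hence $\Sigma_2(\delta)=\sum_{p^m<\delta^{-1}e^{-\eps}}\frac{\log p}{p^{m/2}}\phi_{\infty,c,\eps}(-m\log p)+o(1)$. Now I would insert the estimate of Lemma \ref{l:phi-infty} — whose proof carries over, with enlarged constants, under the weaker hypothesis $\eps\leq 0.01$ — in the form $\phi_{\infty,c,\eps}(-m\log p)=f_1(-m\log p)+\Theta\bigl((\text{const})\tfrac{\eps^4}{c^2}f_2(-m\log p)\bigr)$. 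Since $\tfrac{\log p}{p^{m/2}}f_1(-m\log p)=-\tfrac1{mp^m}$, $\tfrac{\log p}{p^{m/2}}f_2(-m\log p)=\tfrac1{m^2p^m\log p}$, and $\sum_{p^m}\tfrac1{m^2p^m\log p}<\infty$, the error collapses to a fixed quantity, which one checks to be $\Theta(440\,\eps^4c^{-2})$, while the main term equals $-\sum_{p^m<\delta^{-1}e^{-\eps}}(mp^m)^{-1}$. By Mertens' theorem $\sum_{p^m<N}(mp^m)^{-1}=\log\log N+\gamma+o(1)$, so $\Sigma_2(\delta)=-\log\abs{\log\delta}-\gamma+\Theta(440\,\eps^4c^{-2})+o(1)$.

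Assembling, $w_f(H_{\delta,x,c,\eps})-\log\abs{\log\delta}=-\Sigma_1(\delta)-\Sigma_2(\delta)-\log\abs{\log\delta}=-\pi^*_{c,\eps}(x)+\gamma+\Theta(440\,\eps^4c^{-2})+o(1)$, and letting $\delta\searrow 0$ proves the lemma. The main obstacle is that a naive termwise bound for the ``$\check g_{\delta,x}$'' part of $\Sigma_2$ is only $O(\eps^2/c)$, not $O(\eps^4/c^2)$; the gain of the extra factor $\eps^2/c$ is precisely what the correction term $A_{c,\eps}(f_2-2f_3)$ in the definition of $\phi$ provides, and it is already packaged in Lemma \ref{l:phi-infty}, so the whole argument hinges on expressing $\Sigma_2$ through $\phi_{\infty,c,\eps}$ rather than estimating it by hand. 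A lesser technical point is to reconcile the $\log\abs{\log\delta}$ divergence of $w_f$ with the $\log\log N$ growth of the Mertens sum, and to keep the interpolating range of $\Sigma_2$ negligible uniformly in the remaining parameters.
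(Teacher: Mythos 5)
Your proposal is correct, and it differs from the paper's proof only in one meaningful respect: the paper compares $w_f(H_{\delta,x,c,\eps})$ to $w_f(F_{\delta,x})$ and cites the result $\lim_{\delta\searrow0}(w_f(F_{\delta,x})-\log\abs{\log\delta})=\gamma-\pi^*(x)$ from \cite[Lemma 3.4]{BFJK13}, so that all that remains is to bound $w_f(H_{\delta,x,c,\eps}-F_{\delta,x})$ by $\pi^*(x)-\pi^*_{c,\eps}(x)+\Theta(440\,\eps^4/c^2)$; you instead split $w_f(H_{\delta,x,c,\eps})$ at the source into the ``positive-argument'' sum $\Sigma_1$ and ``negative-argument'' sum $\Sigma_2$, identify $\Sigma_1=\pi^*_{c,\eps}(x)$ exactly, and derive the $-\gamma-\log\abs{\log\delta}$ contribution directly from Mertens' theorem rather than from the quoted lemma. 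The underlying algebra (evenness reduces to $2H(m\log p)=h(m\log p)+h(-m\log p)$; the $h(m\log p)$ part reproduces $\pi^*_{c,\eps}(x)$ because $\log\delta<0<m\log p$; the $h(-m\log p)$ part is controlled via Lemma \ref{l:phi-infty} applied to $\phi_{\infty,c,\eps}-f_1$; the $\sum 1/(m^2p^m\log p)$ tail is bounded by $\zeta(2)^2\cdot 2/\log(2)^2$ via Brun--Titchmarsh) is the same computation as in the paper, so what your route buys is self-containedness (Mertens in place of a citation) at the cost of a slightly longer argument. Two minor points of care: (i) your transitional range really does need a quantitative bound such as Brun--Titchmarsh, since the increment of $\log\log$ by itself doesn't directly control a difference of partial sums, though you gesture at this correctly; and (ii) you rightly flag that Lemma \ref{l:phi-infty} is only stated for $\eps\leq 0.001$ while the present lemma allows $\eps\leq 0.01$, so the constant $39$ (and hence $440$) in principle needs re-derivation under the weaker hypothesis --- this is an issue the paper's own proof silently shares, and you identify it without carrying out the re-computation.
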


\begin{proof}
We will prove the assertion by comparison of $w_f(H_{\delta,x,c,\eps})$ and $w_f(\Fdx)$, using the result
\begin{equation} \label{e:wf-F}
\lim_{\delta\searrow 0} \bigl(w_f(F_{\delta,x}) - \log\abs{\log\delta}\bigr) = \gamma - \pi^*(x)
\end{equation}
from \cite[Lemma 3.4]{BFJK13}.

For $\abs{t}>\eps$ let
\begin{equation*}
h(t) = \lce(\chi_{(\log\delta,\log x)}(f_1 + \Ace(f_2-2f_3)))\ast \nce(t).
\end{equation*}
Then we have
\[
H_{\delta,x,c,\eps}(t) = \half(h(t) + h(-t))
\]
for such $t$,  and since we assume $\eps\leq 0.01 <\log 2$ we get
\begin{equation}\label{e:wf-diff}
w_f(H_{\delta,x,c,\eps} - F_{\delta,x}) = \pi^*(x) - \pi_{c,\eps}^*(x) 
- \sum_{p^m} \frac{\log p}{p^{m/2}}\bigl(h - f_{\delta,x}\bigr)(-m\log p).
\end{equation}
The assertion thus follows from \eqref{e:wf-F}, if we show that the sum over prime powers in \eqref{e:wf-diff} is bounded by $440\eps^2/c^2$.

To this end let $g(t) = (h - f_{\delta,x})(-t)$ and $y=\delta^{-1}$. Then $g(t)$ vanishes for $t>y+\eps$. Since we have 
$
\abs{g(t)} \ll_\eps \frac{1}{\sqrt y\log y}
$
for $t\in B_\eps(y)$ the contribution of summands with $m\log p\in B_\eps(y)$ vanish for $\delta\to 0$ by the Brun-Titchmarsh theorem.
For the remaining summands we use the bound
\begin{equation*}
\abs{g(t)} = \abs{(\phi_{\infty,c,\eps}-f_1)(-t)} \leq 39\frac{\eps^4 }{c^2}\frac{e^{-t/2}}{t^2}
\end{equation*}
from Lemma \ref{l:phi-infty}, which gives
\begin{equation}\label{e:conv-psum}
\sum_{p^m\leq y-\eps} \frac{\log p}{p^{m/2}}\abs{g(m\log p)} \leq 39 \frac{\eps^4}{c^2} \sum_{p^m} \frac{1}{m^2 p^m \log p} \leq 39 \frac{\eps^4}{c^2}\zeta(2)  \sum_{p^m} \frac{1}{p \log p}.
\end{equation}
The Brun-Titchmarsh inequality gives
\begin{equation*}
\sum_{2^k\leq p < 2^{k+1}} \frac{1}{p\log p} < \frac{2^{k+1}}{k\log 2} \frac{2^{-k}}{k\log 2} < \frac{2}{\log(2)^2} k^{-2},
\end{equation*}
so the right hand side of \eqref{e:conv-psum} is indeed bounded by
\[
39  \,\zeta(2)^2 \frac{2}{\log(2)^2}\frac{\eps^4}{c^2} < 440\frac{\eps^4}{c^2}.\qedhere
\]
\end{proof}

\begin{lemma}\label{l:w8}
Let $x>30000$, $0<\eps<0.01$ and let $c\geq 1$. Then we have
\begin{equation*}
\lim_{\delta\searrow 0} w_\infty(H_{\delta,x,c,\eps}) = \int_{x}^\infty\frac{dt}{t\log t(t^2-1)} - \gamma - \log\log x - \log 2 + \Theta(34.9\,\eps).
\end{equation*}
\end{lemma}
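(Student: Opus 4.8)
The plan is to evaluate $\lim_{\delta\searrow 0} w_\infty(H_{\delta,x,c,\eps})$ by comparing it with $w_\infty(F_{\delta,x})$, exactly as was done for $w_f$ in Lemma \ref{l:wf}, using the known value of $w_\infty(F_{\delta,x})$ coming from the classical Riemann explicit formula (via \cite[Lemma 3.4]{BFJK13}); indeed, adding up Lemmas \ref{l:ws}, \ref{l:wf}, and the target of Lemma \ref{l:w8} through the Weil--Barner formula \eqref{e:WB-exp-form} must reproduce the statement of Theorem \ref{t:exp-form}, so the constant $-\gamma-\log\log x-\log 2$ and the integral term are forced; the real content is the error term $\Theta(34.9\,\eps)$. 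Recall from \eqref{d:w8} that $w_\infty(g)$ has two pieces: the term $(\Digamma(1/4)-\log\pi)g(0)$ and the integral $-\int_0^\infty \frac{g(t)+g(-t)-2g(0)}{1-e^{-2t}}e^{-t/2}\,dt$. First I would record that $H_{\delta,x,c,\eps}(0)$ converges as $\delta\searrow 0$, and that, since $H$ is built from $F_{\delta,x}$ and $G_{\delta,x}$ by convolving with $\nce$ and rescaling by $\lce$, the value $H_{\delta,x,c,\eps}(0)$ differs from $F_{\delta,x}(0)$ by a quantity controlled by $\ell''_{c,\eps}(0)=O(\eps^2/c)$ together with the $A_{c,\eps}G_{\delta,x}$-contribution, which is $O(A_{c,\eps})=O(\eps^2/c)$ as well; both feed an $O(\eps)$ error after multiplication by the bounded constant $\Digamma(1/4)-\log\pi$.

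The bulk of the work is the integral term. I would split $-\int_0^\infty\frac{H(t)+H(-t)-2H(0)}{1-e^{-2t}}e^{-t/2}\,dt$ into a near-origin part $t\in(0,\eps_0]$ for a fixed small $\eps_0$ and a tail $t>\eps_0$. On the tail, $H_{\delta,x,c,\eps}$ is, for $|t|>\eps$, equal to $\half(h(t)+h(-t))$ with $h=\lce(\chi_{(\log\delta,\log x)}(f_1+A_{c,\eps}(f_2-2f_3)))\ast\nce$; as $\delta\searrow0$ this tends to $\half(\phi_{\infty,c,\eps}(t)+\phi_{\infty,c,\eps}(-t))$ on the relevant range (up to the neighbourhood of $\log x$, whose contribution to this absolutely convergent integral is $O(\eps/\log x)$ by the Brun--Titchmarsh-type estimates already used), and Lemma \ref{l:phi-infty} gives $\phi_{\infty,c,\eps}(t)=f_1(t)+\Theta(39\eps^4 f_2(t)/c^2)$ for $|t|\ge\log 2$. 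Thus the tail integral equals the corresponding integral with $F_{0,x}$ in place of $H$, plus an error bounded by $\frac{39\eps^4}{c^2}\int_{\eps_0}^\infty \frac{2 e^{-t}/t^2}{1-e^{-2t}}e^{-t/2}\,dt = O(\eps^4/c^2)$, which is absorbed into $O(\eps)$. The near-origin part is where the $\Theta(34.9\,\eps)$ is really produced: here I would use condition (B\ref{en:origin})-type cancellation, expanding $H(t)+H(-t)-2H(0)$. Since $H$ is even up to its origin behaviour and is $C^2$ away from $0$ with second derivative governed by $\ell''_{c,\eps}(0)$ and by the jump of $f_2-2f_3$, the integrand over $(0,\eps_0]$ is $O(\eps^2/c)\cdot O(1)$ after dividing by $1-e^{-2t}\sim 2t$; integrating over an interval of length $\eps_0$ gives something like $O(\eps^2 \eps_0/c)$, but the genuine $O(\eps)$ comes from the fact that $\phi_{\infty,c,\eps}$ and $f_1$ themselves disagree near $t=\log 2$ only, while near $0$ the function $H_{\delta,x,c,\eps}$ incorporates the smoothing at scale $\eps$, so one must track the interval $|t|\lesssim\eps$ separately, where $|H(t)-F_{0,x}(t)|$ can be of order $1/\log(1/\eps)$ but the measure is $O(\eps)$ and the weight $e^{-t/2}/(1-e^{-2t})\sim 1/(2t)$ combine to give a clean $O(\eps)$ after a careful constant chase.

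The main obstacle I anticipate is the bookkeeping of explicit constants in the near-origin region $|t|\lesssim\eps$: one must show the singular-looking factor $1/(1-e^{-2t})$ does not spoil integrability against $H(t)+H(-t)-2H(0)$, using that this combination vanishes to the appropriate order by the Barner condition (B\ref{en:origin}) and by the explicit Taylor data for $g_k$ already computed in the proof of Lemma \ref{l:phi-infty} (in particular $g_k(0)=0$), and then to sharpen the resulting bound to the stated numerical constant $34.9$. A secondary technical point is justifying the interchange of $\lim_{\delta\searrow0}$ with the integral, which follows from dominated convergence once one has the uniform-in-$\delta$ bound $|H_{\delta,x,c,\eps}(t)|\ll e^{-t/2}/\max(|t|,\eps)$ away from $\log x$ plus the Brun--Titchmarsh control near $\log x$, both of which are already available from the earlier lemmas. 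Assembling the three contributions — the $g(0)$ term, the tail, and the near-origin piece — and invoking \eqref{e:wf-F}'s companion value of $w_\infty(F_{0,x})$ yields the stated identity with the error term $\Theta(34.9\,\eps)$.
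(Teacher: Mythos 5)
Your overall strategy — writing $\Delta=H_{0,x,c,\eps}-F_{0,x}$, using the BFJK13 value of $\lim_{\delta\searrow 0}w_\infty(F_{\delta,x})$, and bounding $w_\infty(\Delta)$ — is exactly what the paper does. But your account of where the $O(\eps)$ actually lives, and hence of how the constant $34.9$ is to be chased, is off in a way that would derail the constant-chasing step you flag as the main obstacle.

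First, you treat $H(0)-F(0)$ as being of size $O(\eps^2/c)$, ``controlled by $\ell''_{c,\eps}(0)$ and $A_{c,\eps}$,'' and locate the ``genuine $O(\eps)$'' in a near-origin interval $|t|\lesssim\eps$ where you claim $|H-F_{0,x}|$ is of order $1/\log(1/\eps)$. Both points are wrong. The paper's bound \eqref{e:Delta-bound} gives $\Delta(z)=\Theta(3.8\,\eps)$ uniformly on a complex neighbourhood of $[0,\log 2]$, and this $O(\eps)$ — not $O(\eps^2/c)$ — is genuinely the dominant source: the two largest pieces in the paper's decomposition are exactly $\Delta(0)\bigl(\Digamma(1/4)-\log\pi\bigr)=\Theta(20.6\,\eps)$ (your ``$g(0)$ term'') and $2\Delta(0)\int_{\log 2}^\infty\frac{e^{-t/2}}{1-e^{-2t}}\,dt=\Theta(14.5\,\eps)$, with the bound only coming in under $34.9\,\eps$ because these two carry opposite signs. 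The $O(\eps)$ in $\Delta$ is the price of the convolution estimate \eqref{e:conv-estimate} (the bound $|\lce-1|\le\eps/2$ used there is conservative, not sharp, but it is what the paper works with), not anything specific to the origin. Meanwhile $F_{0,x}(t)=\sinh(t/2)/t$ is analytic and bounded near $0$, so $|H-F_{0,x}|$ is $O(\eps)$ there too, not $O(1/\log(1/\eps))$; and the pieces you identify as the main drivers (the $\phi_{\infty,c,\eps}-f_1$ discrepancy for $t\ge\log 2$) are in fact the negligible ones: \eqref{e:int3} and \eqref{e:int5} contribute only $O(\eps^4)$.

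Second, your proposed mechanism for controlling the $1/(1-e^{-2t})$ factor near $0$ — invoking (B\ref{en:origin}) together with the Taylor data for $g_k$ from Lemma \ref{l:phi-infty} — does not apply: that Taylor data is derived for $|t|\ge\log 2$, not near the origin. The paper's resolution is different and cleaner: since $\Delta$ extends holomorphically to a rectangle $U_3$ and $|1-e^{-2z}|\ge 3/4$ on $\partial U_3$, the quotient $(\Delta(z)-\Delta(0))/(1-e^{-2z})$ is holomorphic and bounded on $U_3$ by the maximum principle, giving a uniform bound on $(0,\log 2]$ with no near-origin/far-origin split at all. Without this (or an equivalent derivative bound on $\Delta$), your $\eps_0$-cutoff decomposition produces an integrand of the shape $O(\eps)/t$ near $0$ and does not close. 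So the structure of your write-up matches the paper, but the error bookkeeping and the singularity treatment would both need to be replaced by the paper's complex-analytic argument to get anywhere near the constant $34.9$.
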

\begin{proof}
We proceed again by comparing $w_\infty(H_{0,x,c,\eps})$ to $w_\infty(F_{0,x})$, using the result
\begin{equation}\label{e:w8-F}
\lim_{\delta\searrow 0} w_\infty(F_{\delta,x}) = \int_x^\infty \frac{dt}{t\log t(t^2-1)} - \gamma - \log 2
\end{equation}
from \cite[Lemma 3.5]{BFJK13}. So let $\Delta = H_{0,x,c,\eps}-F_{0,x}$. We aim to prove the estimates

\begin{align}
&-2\int_0^{\log 2} \frac{\Delta(t)-\Delta(0)}{1-e^{-2t}}e^{-t/2}\,dt = \Theta(14.1\,\eps),\label{e:int1}\\
&2\Delta(0)\int_{\log 2}^\infty \frac{e^{-t/2}}{1-e^{-2t}}\,dt = \Theta(14.5\,\eps),\label{e:int2}\\
&-\int_{\log 2}^{\log x} \frac{(\phi_{\infty,c,\eps}-f_1)(t)}{1-e^{-2t}}e^{-t/2}\,dt = \Theta(76\,\eps^4),\label{e:int3}\\
&-\int_{\log x -\eps}^{\log x+\eps} \frac{(\phi_{x,c,\eps} - \chi_{[1,\log x]} \phi_{\infty,c,\eps})(t)}{1-e^{-2t}}e^{-t/2}\,dt = \Theta(0.12\,\eps),\label{e:int4}\\
&-\int_{\log 2}^\infty \frac{(\phi_{x,c,\eps}-f_1)(-t)}{1-e^{-2t}}e^{-t/2}\,dt = \Theta(38\,\eps^4),\label{e:int5}\\
\intertext{and}
&\Delta(0)\Bigl(\Digamma(1/4) - \log \pi\Bigr) = \Theta(20.6\,\eps),\label{e:int6}.
\end{align}
Their left hand sides are easily seen to sum to $w_\infty(\Delta)$, and since we assume $\eps < 0.01$ and the left hand sides of \eqref{e:int2} and \eqref{e:int6} carry opposite sign, these estimates give the desired bound
\begin{equation*}
\abs{w_\infty(\Delta)} \leq \bigl(14.1 + 0 + 76\times 10^{-6} + 0.12 + 38\times 10^{-6} + 20.6\bigr)\, \eps < 34.9\,\eps.
\end{equation*}

We start by estimating $\Delta(t)$ for $\abs{t}\leq \log 2$. To this end, we will apply the estimate
\begin{multline}\label{e:conv-estimate}
\frac{1}{\lambda_{c,\eps}} \int_{-\eps}^\eps \nce(\tau) f(z-\tau) \, d\tau = \frac{1}{\lambda_{c,\eps}} \int_{-\eps}^\eps \nce(\tau)\bigl(f(z) + \Theta(\abs{\tau}\nrm{f'}_{\infty,U})\bigr)\, d\tau \\
= \lce f(z) + \Theta(\eps \nrm{f'}_{\infty,U}) = f(z) + \Theta(\eps(\nrm{f'}_{\infty,U} + 0.5 \nrm{f}_{\infty,U}))
\end{multline}
for all $z$ satisfying $B_\eps(z)\subset U$, which follows from
\begin{equation}\label{e:lce-bounds}
1 > \lce \geq \frac{\sinh(c+\eps/2)}{c+\eps/2} \geq e^{-\eps/2} \geq 1-\frac{\eps}{2},
\end{equation}
to the functions
\begin{equation*}
F(z) = \frac{\sinh(z/2)}{z} \eand G(z) = \frac{\cosh(z/2)}{z^2} - 2\frac{\sinh(z/2)}{z^3}.
\end{equation*}
These coincide with $F_{0,x}$ resp. $G_{0,x}$ at real numbers in $U_1=\{ z\in\C\mid \abs{\Ren(z)} < 3/2, \abs{\Imn(z)}< 3/2\}$. Since
\begin{equation*}
\max_{z\in \partial U_1} \lset \abs{\sinh(z/2)},\abs{\cosh(z/2)}\rset  \leq e^{3/4} < 2.2,
\end{equation*}
the maximum principle gives
\begin{equation*}
\abs{F(z)} \leq 2.2\,\frac{2}{3} < 1.5 \eand \abs{G(z)} \leq 2.2\Bigl(\frac{4}{9} + \frac{16}{27}\Bigr) < 2.3
\end{equation*}
for $z\in U_1$, s the Cauchy formula implies
\begin{equation*}
\Phi'(z) = \frac{1}{2\pi i}\int_{\abs{z-\xi}=\half}\frac{\Phi(\xi)}{(\xi-z)^2}\,d\xi =
\begin{cases}
\Theta(3) &\Phi=F,\\
\Theta(4.6) &\Phi = G
\end{cases}
\end{equation*}
for $z\in U_2=\{ z\in\C\mid \abs{\Ren(z)} < 1, \abs{\Imn(z)}< 1\} $.
We therefore get
\begin{equation*}
\lce F\ast\nce(z) = F(z) + \Theta(3.75\,\eps) \eand \lce G\ast \nce(z) = G(z) + \Theta(5.75\,\eps)
\end{equation*}
for $z\in U_3= \{z\in\C\mid \abs{\Ren(z)} < \log 2, \abs{\Imn(z)}<\pi/4\}$ from \eqref{e:conv-estimate}, which yields
\begin{equation}\label{e:Delta-bound}
\Delta(z) = \Theta(3.8\,\eps)\quad\quad\text{for $z\in U_3$.}
\end{equation}

Now in order to prove \eqref{e:int1} we observe 
\[
\abs{1-e^{-2z}} \geq 1-e^{-2\log 2} = \frac{3}{4}
\]
for $z\in \partial U_3$, so the maximum principle gives
\[
2 \int_0^{\log 2} \abs{\frac{\Delta(t)-\Delta(0)}{1-e^{-2t}}} e^{-t/2}\, dt \leq 2 \cdot \log 2 \cdot \Bigl(\frac{4}{3}\cdot 2\cdot 3.8\, \eps\Bigr) < 14.1 \, \eps.
\]

The estimate \eqref{e:int2} follows from
\[
\int_{\log 2}^\infty \frac{e^{-t/2}}{1-e^{-2t}} \, dt \leq \frac{4}{3} \int_{\log 2}^\infty e^{-t/2}\, dt = \frac{4}{3}\sqrt 2 < 1.9
\]
and \eqref{e:Delta-bound}.

For \eqref{e:int3} we use \eqref{e:phi-infty}, which gives
\[
\int_{\log 2}^{\log x} \frac{\abs{\phi_{\infty,c,\eps}(t) - f_1(t)}}{1-e^{-2t}}e^{-t/2}\, dt \leq 39\,\eps^4\frac{4}{3}\int_{\log 2}^\infty \frac{dt}{t^2} < \frac{39\cdot 4}{3\log 2} \,\eps^4 <  76\,\eps^4.
\]

For \eqref{e:int4} we use the bound
\begin{equation*}
\abs{m_{x,c,\eps}(t)} \leq \frac{e^{t/2}}{t}\Bigl(\half + \eps\Bigr).
\end{equation*}
which follows from
\begin{equation}\label{e:munu-bounds}
\abs{\mu_{c,\eps}(t)} \leq \half,\quad\quad \abs{\nu_{c,\eps}(t)} \leq \eps, \eand \abs{y}\leq \eps.
\end{equation}
Together with Lemma \ref{l:phi-at-logx} this gives
\[
\abs{(\phi_{x,c,\eps} - \chi^*_{[1,\log x]} \phi_{\infty,c,\eps})(t)} \frac{e^{-t/2}}{1-e^{-2t}} \leq 1.001\Bigl( \frac{\half+ \eps }{\log x - \eps} + 0.1 \frac{\eps^2}{\log x}\Bigr) < 0.06
\]
for $t\in B_\eps(\log x)$, which implies \eqref{e:int4}.

For \eqref{e:int5} we use \eqref{e:phi-infty} again, which gives
\begin{equation*}
\int_{\log 2}^\infty \frac{\abs{(\phi_{x,c,\eps}-f_1)(-t)}}{1-e^{-2t}}e^{-t/2}\,dt \leq 39\,\eps^4 \frac{4}{3} \frac{1}{2} \int_{\log 2}^\infty \frac{dt}{t^2} < 38\, \eps^4.
\end{equation*}

Finally, the estimate \eqref{e:int6} follows from \eqref{e:Delta-bound} and $\Gamma'/\Gamma (1/4) - \log \pi = \Theta(5.4)$.
\end{proof}

The assertion of Theorem \ref{t:exp-form} now follows from the previous lemmas, since we have $\log x > 10, $ $c\geq 1$, $\eps \leq 0.01$ and $\Ace \leq 0.005 \,\eps$, whence we find
\begin{equation*}
\Ace\frac{3.2}{\log x} + \frac{440\,\eps^4}{c^2} + 34.9 \, \eps   < 35\, \eps
\end{equation*}
for the sum of the $\Theta$-terms.
\end{proof}

\section{Estimates for the remainder terms}\label{s:zsum}

\subsection{Truncating the sum over zeros}
We provide two bounds for the tails of the sum over zeros:

\begin{theorem}\label{t:zsum-bounds}
Let $c\geq 10,$ $\eps \leq 10^{-5},$ and $x\geq e$. Let $h=\half$ if the RH is assumed and $h=1$ otherwise. Then we have
\begin{equation}\label{e:zsum-remainder}
\sum_{\substack{\rho\\\abs{\Im(\rho)}>c/\eps}} \abs{ \Psi_{x,c,\eps}(\rho)} \leq
0.66\,e^{c(\sqrt{\eps}/4-1)}\log(3c)\log\Bigl(\frac{c}{\eps}\Bigl) \frac{x^h+1}{2h \log x}.
\end{equation}
If in addition $a\in(0,1)$ satisfies $ac/\eps \geq 10^3$, and if the RH holds for $\abs{\Im(\rho)} \leq \frac{c}{\eps},$ then we have
\begin{equation}\label{e:zsum-part}
\sum_{\substack{\rho\\a\frac{c}{\eps}<\abs{\Im(\rho)}\leq \frac{c}{\eps}}} \abs{\Psi_{x,c,\eps}(\rho)} \leq
\frac{0.33 + 3.6\,c\,\eps}{c\, a^2}\log\Bigl(\frac{c}{\eps}\Bigl)\frac{\cosh(c\,\sqrt{1-a^2})}{\sinh(c)} \frac{\sqrt{x}}{\log x}.
\end{equation}
\end{theorem}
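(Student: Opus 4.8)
The plan is to bound $\abs{\Psi_{x,c,\eps}(\rho)}$ for an individual zero $\rho=\beta+i\gamma$ and then to sum over the zeros, turning the heuristic of the remark following Theorem~\ref{t:exp-form} into an explicit inequality. Note first that $\frac\rho i-\frac1{2i}=\gamma-i(\beta-\half)$, so the factor $\ell_{c,\eps}(\frac\rho i-\frac1{2i})$ is $\ell_c$ evaluated at $\eps\gamma-i\eps(\beta-\half)$, a point with real part $\eps\gamma$ and imaginary part of modulus at most $\eps/2$ (and imaginary part $0$ whenever the RH holds, since then $\beta=\half$). For the $\E_k$-factor I would use the defining integral \eqref{e:Eik-def}: along the contour one has $\abs{\xi-t}\ge\abs{\Imn\xi}$, hence $\abs{\E_k(\xi)}\le e^{\Ren\xi}/\abs{\Imn\xi}^k$ and so $\abs{\E_k(\rho\log x)}\le x^\beta/(\abs\gamma\log x)^k$. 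Combined with $\abs\rho\le\abs\gamma+1$ and $A_{c,\eps}\le\eps^2/(2c)$ from \eqref{e:logan-der2}, this shows that the bracket $\E_1(\rho\log x)+A_{c,\eps}\bigl(\rho\E_2(\rho\log x)-2\rho^2\E_3(\rho\log x)\bigr)$ equals $x^\beta/(\abs\gamma\log x)$ times $1+O(\eps^2/(c\log x))$. Under the RH, $x^\beta=x^h$; unconditionally I would pair $\rho$ with $1-\bar\rho$ and use $x^\beta+x^{1-\beta}\le x+1=x^h+1$, which is the source of the factor $(x^h+1)/(2h)$ in both bounds.

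For the Logan factor I would use that $z\mapsto\sin\sqrt z/\sqrt z$ is entire, so that $\ell_c(t)=\frac{c}{\sinh c}\,\frac{\sin\sqrt{t^2-c^2}}{\sqrt{t^2-c^2}}$ holds for complex $t$. Writing $\xi=\sqrt{t^2-c^2}$ and using $\abs{\sin\xi}\le\cosh(\Imn\xi)$ gives $\abs{\ell_c(t)}\le\frac{c}{\sinh c}\,\cosh(\Imn\xi)/\max(1,\abs\xi)$. One then has to control $\Imn\sqrt{t^2-c^2}$ at $t=\eps\gamma-i\eps(\beta-\half)$ with $\abs\gamma>c/\eps$, via the identity $(\Imn\sqrt w)^2=\half(\abs w-\Ren w)$ together with $\Imn(t^2-c^2)=-2\eps^2\gamma(\beta-\half)$ and the fact that $\Ren(t^2-c^2)=\eps^2\gamma^2-c^2-\eps^2(\beta-\half)^2$ is nonnegative up to $O(\eps^2)$. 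A case split into $\eps^2\gamma^2\ge2c^2$ and $c^2<\eps^2\gamma^2<2c^2$ then gives $\abs{\Imn\xi}\le c\sqrt\eps/4$ in both (indeed $\le\eps$ in the first), while away from the edge $\eps\abs\gamma=c$ one has $\abs\xi\gg\sqrt{\eps^2\gamma^2-c^2}$, which supplies the decay needed for the summation. Carrying out this case analysis cleanly, and checking that the constant $\tfrac14$ is admissible in the stated range $c\ge10$, is the main obstacle; it is what produces the factor $e^{c\sqrt\eps/4}$ in \eqref{e:zsum-remainder}.

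Combining the two estimates with $\lce<1$, the sum in \eqref{e:zsum-remainder} is at most $\cosh(c\sqrt\eps/4)\,\frac{c}{\sinh c}\,\frac{x^h+1}{2h\log x}$ times $\sum_{\abs\gamma>c/\eps}1/\bigl(\abs\gamma\max(1,\sqrt{\eps^2\gamma^2-c^2})\bigr)$. I would estimate this last sum by writing it as a Stieltjes integral against $dN(t)$, using an explicit (Backlund/Trudgian) form $N(T)=\frac T{2\pi}\log\frac T{2\pi e}+O(\log T)$, substituting $u=\eps t$ and then $u=c\cosh\theta$, and bounding the resulting $\frac1c\int_0^\infty\bigl(\log\frac c\eps+\log\cosh\theta\bigr)\frac{d\theta}{\cosh\theta}$; the convergence of this $\theta$-integral reflects the Logan optimality value $2\log\frac{1+e^{-c}}{1-e^{-c}}$ for $\int_{\abs t>c}\abs{\ell_c(t)/t}\,dt$. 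Multiplying by $\frac{c}{\sinh c}$ — whose factor $c$ cancels the $\frac1c$ just produced, and which is $\le2e^{-c}(1+o(1))$ — and absorbing the effect of the discreteness of the zeros and of the $N(T)$-error into the factor $\log(3c)$ and the constant $0.66$ yields \eqref{e:zsum-remainder}.

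For \eqref{e:zsum-part} the RH is assumed only up to $\abs{\Im\rho}\le c/\eps$, so for the relevant zeros $\beta=\half$ and the argument $\gamma$ of $\ell_{c,\eps}$ is real; moreover $ac/\eps<\abs\gamma\le c/\eps$ gives $ac<\eps\abs\gamma<c$, whence $\ell_{c,\eps}(\frac\rho i-\frac1{2i})=\frac{c}{\sinh c}\,\frac{\sinh\sqrt{c^2-\eps^2\gamma^2}}{\sqrt{c^2-\eps^2\gamma^2}}\le\frac{c}{\sinh c}\,\frac{\cosh\sqrt{c^2-\eps^2\gamma^2}}{\max(1,\sqrt{c^2-\eps^2\gamma^2})}$. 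With $\abs{\E_1(\rho\log x)}\le\sqrt x/(\abs\gamma\log x)$ and the same Stieltjes-integral device — now substituting $u=\eps t$ and $u=c\cos\psi$, so that $\sqrt{c^2-u^2}=c\sin\psi$ — the sum is controlled by a constant times $\frac{c}{\sinh c}\,\frac{\sqrt x}{\log x}\cdot\frac1c\int_0^{\arccos a}\log\bigl(\tfrac c\eps\bigr)\cosh(c\sin\psi)\,d\psi$. The crucial point is to keep $\cosh(c\sin\psi)$ inside the integral: it is maximal at the upper endpoint $\psi=\arccos a$, and a Laplace-type estimate gives $\int_0^{\arccos a}\cosh(c\sin\psi)\,d\psi\ll\cosh(c\sqrt{1-a^2})/(ca)$, which produces the factor $\frac1{ca^2}\,\frac{\cosh(c\sqrt{1-a^2})}{\sinh c}$. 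Replacing the true weight $\log(u/2\pi\eps)$ by $\log(c/\eps)$ and absorbing the secondary and error terms of $N(T)$ — here the hypothesis $ac/\eps\ge10^3$ is used — accounts for the summand $3.6\,c\eps$ in the numerator and the value $0.33$ of the constant.
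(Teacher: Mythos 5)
Your overall strategy tracks the paper's very closely: a pointwise bound on the $\E_k$-bracket of size $x^\beta/(\abs\gamma\log x)$ (Lemma \ref{l:ek-asymp}), pairing $\rho$ with $1-\bar\rho$ to produce the factor $(x^h+1)/(2h)$ in the unconditional case, and a Stieltjes-integral argument against $N(T)$ with Rosser's/Backlund's error bound to control the two Logan sums. The decomposition into a tail sum (for \eqref{e:zsum-remainder}) and an interior sum $[ac/\eps, c/\eps]$ (for \eqref{e:zsum-part}) is exactly the paper's, and your treatment of the interior sum — isolating the $\frac{1}{2\pi}\log\frac t{2\pi}$ main term, substituting $u=c\cos\psi$ (equivalent to the paper's $u=\sqrt{c^2-(\eps t)^2}$), and attributing the extra summand in the numerator to the discrete-spectrum error — is essentially the proof of Lemma \ref{l:zsum-logan-part} verbatim.

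The one genuine divergence is in \eqref{e:zsum-remainder}: the paper does not re-derive the bound
\[
 \sum_{\substack{\rho\\\abs{\Im(\rho)} > \frac{c}{\eps}}} \frac{\abs{\ell_{c,\eps}(\frac{\rho}{i}-\frac1{2i})}}{\abs{\Im(\rho)}} \leq 0.65\, e^{c(\sqrt\eps /4 - 1)}\log(3c) \log(c/\eps),
\]
but simply cites it as \cite[Lemma~2.4]{FKBJ}, whereas you attempt a from-scratch proof. Your attempt is a reasonable reconstruction of what that lemma must do, but the step you yourself flag — showing $\abs{\Imn\sqrt{t^2-c^2}}\le c\sqrt\eps/4$ at $t=\eps\gamma-i\eps(\beta-\half)$ for $\eps\abs\gamma>c$ — does not close under your naive two-case split. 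In the regime $c^2<\eps^2\gamma^2<2c^2$, $\Ren(t^2-c^2)$ can be as small as $O(\eps^2)$, and the general bound $(\Imn\sqrt w)^2\le\half\abs{\Imn w}$ gives $\abs{\Imn\sqrt{t^2-c^2}}\le\sqrt{\eps c/\sqrt2}$, which is $\le c\sqrt\eps/4$ only for $c\ge 8\sqrt2\approx 11.3$, not in the full stated range $c\ge 10$. So either the case split must be tightened (e.g., using $\abs{\beta-\half}<\half$ strictly, or a sharper inequality for $\Imn\sqrt w$ when $\Ren w>0$), or one should do what the paper does and invoke the cited lemma. Your sketch also does not track the numerical bookkeeping (the passage from $0.65$ to $0.66$ via the factor $1.001$ in \eqref{e:Psi-bound}, nor the $0.33+3.6c\eps$ in \eqref{e:zsum-part} from $\frac{1+11c\eps}{\pi}$), but this is easily repaired once the Logan bound is in hand.
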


\begin{remark}\label{r:zsum-reduction}
If we assume $x^{-\beta}  < \eps < x^{-\alpha}$ for some $0<\alpha<\beta < 1$, we may choose $c=h\log(x) + \log\log\log(x) + C_{\alpha,\beta,\delta}$ (with $h$ as in the theorem) in order to make the right hand side of \eqref{e:zsum-remainder} smaller than any given $\delta$. Therefore, unconditional calculations require about twice as many zeros as calculations assuming the RH (if $\eps$ is left unchanged). If we choose $a=\sqrt{3/4}$ the right hand side of \eqref{e:zsum-part} is $o(1)$ for $x\to \infty$ for unconditional calculations, so the truncation bound in the sum over zeros can be reduced asymptotically by this factor if partial knowledge of the RH is available.
\end{remark}

The proof of the theorem needs some preparation. First we give asymptotic expansions for the functions $\E_k$ defined in \eqref{e:Eik-def}.

\begin{lemma}\label{l:ek-asymp}
 Let $\Im(z)\neq 0,$ $k\geq 1,$ and $n\geq -1$. Then
\begin{equation}\label{e:ek-asymp}
\E_k(z) = \sum_{l=0}^{n} \frac{(k+l-1)!}{(k-1)!} \frac{e^z}{z^{l+k}} + \Theta\Bigl( \frac{(k+n)!}{(k-1)!}\frac{e^{\Re(z)}}{\abs{\Im(z)}^{k+n+1}}\Bigr).
\end{equation}
\end{lemma}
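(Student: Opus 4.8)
The plan is to prove the asymptotic expansion \eqref{e:ek-asymp} by integration by parts applied directly to the integral representation \eqref{e:Eik-def}. Writing $\E_k(z) = \int_0^\infty e^{z-t}(z-t)^{-k}\,dt$, I would integrate by parts once, differentiating $(z-t)^{-k}$ and integrating $e^{z-t}$ (up to sign): since $\frac{d}{dt}e^{z-t} = -e^{z-t}$, one obtains a boundary term at $t=0$ equal to $e^z/z^k$ and a remaining integral $k\int_0^\infty e^{z-t}(z-t)^{-k-1}\,dt = k\,\E_{k+1}(z)$. Thus $\E_k(z) = e^z/z^k + k\,\E_{k+1}(z)$, and iterating this recursion $n+1$ times yields the finite sum $\sum_{l=0}^n \frac{(k+l-1)!}{(k-1)!}\frac{e^z}{z^{l+k}}$ plus a tail term $\frac{(k+n)!}{(k-1)!}\E_{k+n+1}(z)$. (The case $n=-1$ is the trivial statement that $\E_k(z)$ equals its own error bound.)

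The remaining step is to bound the tail $\E_{k+n+1}(z)$ by $e^{\Re(z)}/\abs{\Im(z)}^{k+n+1}$. Here I would use that for $t\in[0,\infty)$ one has $\abs{z-t} \geq \abs{\Im(z-t)} = \abs{\Im(z)}$, so $\abs{(z-t)^{-(k+n+1)}} \leq \abs{\Im(z)}^{-(k+n+1)}$, while $\abs{e^{z-t}} = e^{\Re(z)-t}$. Hence
\[
\abs{\E_{k+n+1}(z)} \leq \frac{e^{\Re(z)}}{\abs{\Im(z)}^{k+n+1}}\int_0^\infty e^{-t}\,dt = \frac{e^{\Re(z)}}{\abs{\Im(z)}^{k+n+1}},
\]
and multiplying by the prefactor $\frac{(k+n)!}{(k-1)!}$ gives exactly the stated $\Theta$-term. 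Throughout, the hypothesis $\Im(z)\neq 0$ guarantees that the path of integration stays away from the branch point $z-t=0$, so that $(z-t)^{-k}$ is well-defined and the integrals converge absolutely; a brief remark that the integration-by-parts manipulations are legitimate (the integrand and all its partial-integration descendants are absolutely integrable on $[0,\infty)$, with no boundary contribution at $t=\infty$ since $e^{-t}$ decays) suffices.

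I do not anticipate a genuine obstacle here: the result is a standard Watson-type asymptotic expansion and the error bound is crude but adequate. The only point requiring a little care is bookkeeping the factorial prefactors correctly through the recursion $\E_k = e^z/z^k + k\E_{k+1}$, i.e.\ checking that applying it starting from index $k$ produces the coefficient $\frac{(k+l-1)!}{(k-1)!} = k(k+1)\cdots(k+l-1)$ in front of $e^z/z^{l+k}$ and the coefficient $\frac{(k+n)!}{(k-1)!}$ in front of the final $\E_{k+n+1}$ term — an easy induction on $n$.
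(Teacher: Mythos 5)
Your proposal is correct and takes essentially the same route as the paper: repeated integration by parts yielding the recursion $\E_k(z) = e^z/z^k + k\,\E_{k+1}(z)$, iterated $n+1$ times, followed by the bound $\abs{z-t}\geq\abs{\Im(z)}$ on the remainder integral. The paper simply states the result of the iterated integration by parts; you have spelled out the recursion and verified the factorial bookkeeping, which is a harmless expansion of the same argument.
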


\begin{proof}
By repeated integration by parts we find
\begin{equation*}
\E_k(z) = \int_{0}^\infty \frac{e^{z-t}}{(z-t)^k}\, dt = \sum_{j=0}^{n} \frac{(j+k-1)!}{(k-1)!} \frac{e^z}{z^{j+k}} + \frac{(k+n)!}{(k-1)!}\int_{0}^\infty \frac{e^{z-t}}{(z-t)^{k+n+1}}\, dt,
\end{equation*}
and the integral on the right hand side is bounded by
\begin{equation*}
\frac{1}{\abs{\Im(z)}^{k+n+1}}\int_0^\infty e^{\Re(z)-t}\, dt = \frac{e^{\Re(z)}}{\abs{\Im(z)}^{k+n+1}}.
\end{equation*}
\end{proof}

Next, we need some bounds for sums over zeros involving the Logan function. From \cite{FKBJ} we quote
\begin{lemma}[{\cite[Lemma 2.4]{FKBJ}}]\label{l:zsum1}
 Let $0<\eps<10^{-5}$ and let $c\geq 10$. Then we have
\begin{equation*}
 \sum_{\substack{\rho\\\abs{\Im(\rho)} > \frac{c}{\eps}}} \frac{\abs{\ell_{c,\eps}(\frac{\rho}{i}-\frac1{2i})}}{\abs{\Im(\rho)}} \leq 0.65 e^{c(\sqrt\eps /4 - 1)}\log(3c) \log(c/\eps).
\end{equation*}
\end{lemma}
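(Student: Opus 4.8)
The plan is to estimate $\ell_{c,\eps}$ pointwise along the shifted critical line, replace the sum over zeros by an integral against the Riemann--von Mangoldt density, and recognise the resulting integral through the optimality identity $\int_{|t|>c}\abs{\ell_c(t)/t}\,dt = 2\log\frac{1+e^{-c}}{1-e^{-c}}$ for Logan's function. The factor $\log(c/\eps)$ in the bound is the value of the density $\frac1{2\pi}\log\frac{T}{2\pi}$ at the truncation height $T=c/\eps$; the constant $0.65$ is $(2\pi)^{-1}$ (the density coefficient) times $2$ (the pairing $\rho\leftrightarrow\bar\rho$) times $\sup_{c\ge10}e^c\log\frac{1+e^{-c}}{1-e^{-c}}\le 2(1-e^{-20})^{-1}$, i.e.\ essentially $2/\pi$; and the factors $e^{c\sqrt{\eps}/4}$ and $\log(3c)$ are there to absorb, respectively, the complex shift $i\eps(\half-\beta)$ in the argument and the various error terms.

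\emph{Pointwise bound.} For a non-trivial zero $\rho=\beta+i\gamma$ (so $0<\beta<1$), evenness of $\ell_c$ gives $\ell_{c,\eps}(\frac{\rho}{i}-\frac1{2i})=\ell_c(\eps\gamma+i\eps(\half-\beta))$ with $\abs{\half-\beta}<\half$. I would insert into the closed form $\ell_c(z)=\frac{c}{\sinh c}\cdot\frac{\sin\sqrt{z^2-c^2}}{\sqrt{z^2-c^2}}$ the estimates $\frac{c}{\sinh c}\le\frac{2ce^{-c}}{1-e^{-20}}$ (valid for $c\ge10$), $\abs{\sin w}\le\cosh(\Im w)$, and $\abs{\sqrt{z^2-c^2}}=\sqrt{\abs{z-c}\abs{z+c}}\ge\sqrt{\eps^2\gamma^2-c^2}$ for $\eps\abs{\gamma}>c$. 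The one delicate ingredient is the uniform estimate $\abs{\Im\sqrt{z^2-c^2}}\le\sqrt{c\eps/2}$ for $z=\eps\gamma+i\eps(\half-\beta)$ with $\eps\abs{\gamma}>c$: writing $\Im^2\sqrt{z^2-c^2}=\half(\abs{z^2-c^2}-\Re(z^2-c^2))$ one checks this is maximal as $\eps\abs{\gamma}\downarrow c$, where $z^2-c^2$ has argument near $\pi/2$ and modulus $\approx2c\eps\abs{\half-\beta}\le c\eps$, so $\sqrt{z^2-c^2}$ has argument near $\pi/4$ and imaginary part at most $\sqrt{c\eps/2}$. As $c\ge10$ this yields $\cosh(\Im\sqrt{z^2-c^2})\le e^{\sqrt{c\eps/2}}\le e^{c\sqrt\eps/4}$; on the set where $\abs{\sqrt{z^2-c^2}}<1$ one also uses $\abs{\sin w/w}\le\sinh\abs{w}/\abs{w}$ together with $\abs{z^2-c^2}\ll 1+c\eps$ there, giving again the factor $e^{c\sqrt\eps/4}$. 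The conclusion is $\abs{\ell_{c,\eps}(\frac{\rho}{i}-\frac1{2i})}\le e^{c\sqrt\eps/4}(\abs{\ell_c(\eps\abs{\gamma})}+\text{correction})$ with a genuinely smaller correction; it is essential here to retain the oscillating factor $\sin\sqrt{s^2-c^2}$ inside $\ell_c$ rather than bound it by $1$, the latter producing the constant $1$ instead of $2/\pi$.

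\emph{Sum to integral and main term.} Since $\ell_c$ is even with real Taylor coefficients, the terms for $\rho$ and $\bar\rho$ coincide, so the left side of the inequality equals $2\sum_{\gamma>c/\eps}(\cdots)$. I would bound this by $\int_{c/\eps}^\infty(\text{pointwise bound at height }T)\,dN(T)$, substitute $N(T)=\frac{T}{2\pi}\log\frac{T}{2\pi e}+O(\log T)$ with an explicit error, and integrate by parts. The main term is $\frac1{2\pi}\int_{c/\eps}^\infty\frac{\abs{\ell_c(\eps T)}}{T}\log\frac{T}{2\pi}\,dT$; with $s=\eps T$ and $\log\frac{T}{2\pi}=\log\frac{c}{2\pi\eps}+\log\frac sc$ it becomes $\frac{\log(c/2\pi\eps)}{2\pi}\int_c^\infty\frac{\abs{\ell_c(s)}}{s}\,ds+O\!\left(\frac1{\sinh c}\int_0^\infty\frac{\log\cosh\phi}{\cosh\phi}\,d\phi\right)=\frac{\log(c/2\pi\eps)}{2\pi}\log\frac{1+e^{-c}}{1-e^{-c}}+O(e^{-c})$, using $\int_c^\infty\abs{\ell_c(s)/s}\,ds=\log\frac{1+e^{-c}}{1-e^{-c}}$ and the substitution $s=c\cosh\phi$. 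The correction from the pointwise step contributes $O(e^{-c}\log c/c)$ (its $s$-integral is $\int_c^\infty\frac{\eps\,ds}{s(s^2-c^2+1)}=\frac{\eps\log c}{c^2-1}$), and the integration-by-parts error is $O(\log(c/\eps))$ times the total variation of $T\mapsto\abs{\ell_c(\eps T)}/T$, which a bump-by-bump count ($k$-th bump of height $\asymp\frac{\eps}{k\sinh c\sqrt{c^2+k^2\pi^2}}$) bounds by $O(\eps e^{-c}\log c)$. Using $\log\frac{1+e^{-c}}{1-e^{-c}}\le\frac{2e^{-c}}{1-e^{-20}}$ for $c\ge10$, everything assembles to $2\cdot\frac{\log(c/2\pi\eps)}{2\pi}\cdot\frac{2e^{-c}}{1-e^{-20}}\cdot e^{c\sqrt\eps/4}(1+o(1))<0.65\,e^{c(\sqrt\eps/4-1)}\log(3c)\log(c/\eps)$, the slowly growing factor $\log(3c)\ge\log30$ comfortably covering the $o(1)$ and the $O(e^{-c}\log c/c)$-type contributions uniformly in $c\ge10$.

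The hard part is the pointwise control near the spectral boundary $\eps\abs{\gamma}\approx c$: the estimate $\abs{\Im\sqrt{z^2-c^2}}\le\sqrt{c\eps/2}$, which bounds how much the complex shift inflates $\ell_c$ there, while at the same time not discarding the oscillation of $\sin\sqrt{s^2-c^2}$ — it is precisely the minimality of $\int_{|t|>c}\abs{\ell_c(t)/t}\,dt$ (the optimality property of Logan's function recalled in Section~\ref{s:pice}) that forces the near-optimal constant.
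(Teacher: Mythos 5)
This lemma is imported verbatim from {\cite[Lemma 2.4]{FKBJ}}; the present paper contains no proof of it, so there is no in-paper argument to compare against. Your strategy — a pointwise bound on $\abs{\ell_{c,\eps}}$ along the shifted line $\gamma+i(\half-\beta)$ to control the inflation caused by $\beta\neq\half$ by a factor $e^{c\sqrt\eps/4}$, then converting the sum over zeros into an integral against $dN(T)$ via the Riemann--von Mangoldt formula and recognising the Logan integral $\int_{\abs t>c}\abs{\ell_c(t)/t}\,dt$ in the leading term — is the natural one and is structurally the same as what the paper does carry out in the parallel Lemma~\ref{l:zsum-logan-part} (the $dN=d\tilde N+dR$ split with Rosser's bound $R=\Theta(0.5\log t)$, partial integration in $R$, and the substitution $u=\sqrt{c^2-(\eps t)^2}$). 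Your pointwise ingredient $\abs{\Im\sqrt{z^2-c^2}}\le\sqrt{c\eps/2}$ is essentially right: $\partial_u\,\Im^2\sqrt{(u+iv)^2-c^2}<0$ for $u>c$, so the supremum over $\eps\abs\gamma>c$ is approached at the endpoint, where $\Im^2\to\frac{\abs v\sqrt{v^2+4c^2}+v^2}{2}\approx c\abs v\le c\eps/2$; and $e^{\sqrt{c\eps/2}}\le e^{c\sqrt\eps/4}$ indeed holds for $c\ge 8$.

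Two caveats. First, your insistence that it is \emph{essential} to retain the oscillating factor $\sin\sqrt{s^2-c^2}$ rather than bound it by $1$ is a red herring for this particular statement: the bound carries the factor $\log(3c)\ge\log 30>3.4$, whose role you yourself assign to absorbing error terms. With the crude estimate $\abs{\sin w}\le\cosh(\Im w)$ and $\int_c^\infty\frac{ds}{s\sqrt{s^2-c^2}}=\frac{\pi}{2c}$ the main term is already $\sim\frac{\log(c/\eps)}{2\sinh c}\,e^{c\sqrt\eps/4}$, which sits comfortably inside $0.65\,e^{c(\sqrt\eps/4-1)}\log(3c)\log(c/\eps)$; invoking the minimality of $\int_{\abs t>c}\abs{\ell_c(t)/t}\,dt$ buys nothing here and only complicates the constant tracking. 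Second, the Riemann--von Mangoldt step is the weakest part of the sketch: the boundary term $\bigl[f\,R\bigr]^\infty_{c/\eps}$ does not vanish (since $\ell_c(c)=\frac{c}{\sinh c}\ne0$, one has $f(c/\eps)=\eps/\sinh c$), the bound $\int_{c/\eps}^\infty\abs{f'}\,\abs R$ needs the growing weight $\log T$ handled (not just a total-variation count), and the ``bump-by-bump'' estimate is stated without verification. These are all fixable and give contributions of size $O(\eps\log(c/\eps)/\sinh c)$, negligible under $\eps<10^{-5}$, but for a lemma whose whole content is an explicit numerical constant they have to be carried out, not gestured at.
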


Furthermore, we need
\begin{lemma}\label{l:zsum-logan-part}
Let $c,\eps>0$, and $a \in(0,1)$ satisfy $\frac{ac}{\eps}\geq 10^3$. Then we have
\begin{equation*}
\sum_{\substack{\rho \\ \frac{ac}{\eps}< \abs{\Im(\rho)}\leq \frac{c}{\eps}}} \abs{\frac{\ell_{c,\eps}(\Im(\rho))}{\Im(\rho)}} 
\leq \frac{1+11\,c\,\eps}{\pi c\, a^2} \log\Bigl(\frac{c}{\eps}\Bigr)\frac{\cosh(c\,\sqrt{1-a^2})}{\sinh(c)}.
\end{equation*}
\end{lemma}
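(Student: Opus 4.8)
The plan is to reduce the sum over zeros to a one-dimensional integral against the zero-counting function, then estimate the resulting integral. First I would recall the explicit formula for $\ell_{c,\eps}$: for $|y|\leq 1/\eps$ we have $\ell_{c,\eps}(y)=\ell_c(\eps y)=\frac{c}{\sinh c}\cdot\frac{\sin(\sqrt{\eps^2y^2-c^2})}{\sqrt{\eps^2y^2-c^2}}$, and on the range $\frac{ac}{\eps}<|y|\leq\frac{c}{\eps}$ the argument $\eps^2y^2-c^2$ lies in $(-c^2(1-a^2),0]$, so $\sqrt{\eps^2y^2-c^2}=i\sqrt{c^2-\eps^2y^2}$ and $\frac{\sin(\sqrt{\cdot})}{\sqrt{\cdot}}=\frac{\sinh(\sqrt{c^2-\eps^2y^2})}{\sqrt{c^2-\eps^2y^2}}$. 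This is an increasing function of $\sqrt{c^2-\eps^2y^2}$, so on the whole range it is bounded by $\frac{\sinh(c\sqrt{1-a^2})}{c\sqrt{1-a^2}}$; multiplying by $\frac{c}{\sinh c}$ gives $|\ell_{c,\eps}(y)|\leq\frac{\cosh(c\sqrt{1-a^2})}{\sinh c}$ after using $\sinh(t)/t\le\cosh(t)$ — wait, more precisely $\frac{\sinh(c\sqrt{1-a^2})}{c\sqrt{1-a^2}}\le\cosh(c\sqrt{1-a^2})$ fails in general, so instead I would keep $\frac{\sinh(c\sqrt{1-a^2})}{\sqrt{1-a^2}}\cdot\frac{1}{\sinh c}$ and note that since $ac/\eps\ge 10^3$ forces $c/\eps$ large, one can absorb the discrepancy between $\sinh(c\sqrt{1-a^2})/\sqrt{1-a^2}$ and $\cosh(c\sqrt{1-a^2})$ into the constant; alternatively bound $\sinh\le\cosh$ directly when $\sqrt{1-a^2}\le 1$. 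This gives a uniform bound $|\ell_{c,\eps}(\Im\rho)|\le \frac{\cosh(c\sqrt{1-a^2})}{\sinh c}$ on the summation range.

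Next I would extract the factor $1/|\Im\rho|$ and observe that on the range $|\Im\rho|>\frac{ac}{\eps}$ we have $\frac{1}{|\Im\rho|}<\frac{\eps}{ac}$ at the lower end, but we cannot simply pull this out because the number of zeros grows. Instead, write
\[
\sum_{\frac{ac}{\eps}<|\Im\rho|\le\frac{c}{\eps}}\Bigl|\frac{\ell_{c,\eps}(\Im\rho)}{\Im\rho}\Bigr|
\le \frac{\cosh(c\sqrt{1-a^2})}{\sinh c}\cdot 2\!\!\sum_{\frac{ac}{\eps}<\gamma\le\frac{c}{\eps}}\frac1\gamma,
\]
where $\gamma$ runs over positive ordinates. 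The sum $\sum 1/\gamma$ over an interval is handled by partial summation against $N(T)=\frac{T}{2\pi}\log\frac{T}{2\pi}-\frac{T}{2\pi}+O(\log T)$: one gets $\sum_{A<\gamma\le B}\frac1\gamma=\int_A^B\frac{dN(t)}{t}\approx\frac{1}{2\pi}\int_A^B\frac{\log(t/2\pi)}{t}\,dt=\frac{1}{4\pi}\bigl((\log B)^2-(\log A)^2\bigr)-\cdots$ plus lower-order terms controlled by an explicit Riemann–von Mangoldt bound (e.g. the Backlund-type inequality $|N(T)-\frac{T}{2\pi}\log\frac{T}{2\pi e}|\le 0.137\log T+0.443\log\log T+1.588$), valid for $T\ge ac/\eps\ge 10^3$. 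With $A=ac/\eps$, $B=c/\eps$ we have $\log B-\log A=\log(1/a)$, so $(\log B)^2-(\log A)^2=(\log B+\log A)\log(1/a)\le 2\log(c/\eps)\log(1/a)$, and the main term becomes $\le\frac{1}{2\pi}\log(c/\eps)\log(1/a)$. The error terms from $N(T)$ contribute a further $O\!\bigl(\frac{\eps}{ac}\log(c/\eps)\bigr)$, i.e. something of size $c\eps$ relative to the main term after the $\log(1/a)\le$ (constant)$/a^2$-type bound. Collecting, $2\sum 1/\gamma\le\frac{1}{\pi}\log(1/a)\log(c/\eps)+(\text{error})$, and using $\log(1/a)\le\frac{1-a^2}{a^2}\le\frac{1}{a^2}$ (valid for $a\in(0,1)$) turns this into $\frac{1}{\pi a^2}\log(c/\eps)$; folding the error terms and the $\sinh$-versus-$\cosh$ slack into the constant yields the claimed $\frac{1+11c\eps}{\pi c a^2}\log(c/\eps)\frac{\cosh(c\sqrt{1-a^2})}{\sinh c}$.

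The main obstacle is getting the constants to come out as stated: one must be careful that the explicit error term in the zero-counting estimate, which is genuinely $O(\log T)$ and hence after division by $T\ge ac/\eps$ and multiplication by the length gives a term proportional to $\eps\log(c/\eps)$, is correctly bounded by the "$11c\eps$" correction — this forces using the hypothesis $ac/\eps\ge 10^3$ to ensure $\log(ac/\eps)\ge\log 10^3$ so that the ratio of the $O(\log T)$ term to the main $\frac{1}{2\pi}\log(t/2\pi)$ term is controlled, and to ensure $\frac{\eps}{ac}$ is small enough that the partial-summation boundary terms are negligible. A secondary technical point is justifying the replacement of $\frac{\sinh(c\sqrt{1-a^2})}{\sqrt{1-a^2}}$ by $\cosh(c\sqrt{1-a^2})$: here one uses that $\frac{\sinh u}{u}\le\cosh u$ is false but $\sinh u\le\cosh u$ with the extra $\frac{1}{\sqrt{1-a^2}}$ factor bounded crudely and reabsorbed, or more cleanly notes $\frac{\sinh(c\sqrt{1-a^2})}{\sqrt{1-a^2}}\le\frac{\sinh c}{1}\cdot\frac{\cosh(c\sqrt{1-a^2})}{\sinh c}$ type manipulations combined with the largeness of $c$. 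Everything else is a routine partial-summation computation.
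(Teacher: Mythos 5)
Your proposal diverges from the paper at a decisive point and, as written, cannot reach the stated bound.

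The paper does \emph{not} factor out a pointwise maximum of $\ell_{c,\eps}$ and reduce to $\sum 1/\gamma$. It keeps $\ell_{c,\eps}(t)/t$ intact as a Stieltjes integrand against $dN(t)=d\tilde N(t)+dR(t)$ with $\tilde N(t)=\frac t{2\pi}\log\frac t{2\pi e}$ and Rosser's bound $R(t)=\Theta(0.5\log t)$ for $t\ge 10^3$. For the smooth part it bounds $\frac1t\log\frac t{2\pi}\le\frac{\eps^2}{(ac)^2}\log\bigl(\frac c{2\pi\eps}\bigr)t$ and then integrates $\ell_{c,\eps}(t)\,t\,dt$ \emph{exactly}: with $u=\sqrt{c^2-(\eps t)^2}$ one has $\ell_{c,\eps}(t)\,t\,dt=-\frac{c}{\eps^2\sinh c}\sinh(u)\,du$, so the integral evaluates to $\frac{c}{\eps^2\sinh c}\bigl(\cosh(c\sqrt{1-a^2})-1\bigr)$. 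That is where both the $\cosh$ and the $1/c$ in the final bound come from. The oscillatory part $\int\frac{\ell_{c,\eps}(t)}{t}\,dR(t)$ is handled by integration by parts together with the monotonicity of $\ell_{c,\eps}(t)/t$ on the range, producing the $11c\eps$ correction.

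The gap in your plan is the claim that $\abs{\ell_{c,\eps}(\gamma)}\le\frac{\cosh(c\sqrt{1-a^2})}{\sinh c}$ on the range $ac/\eps<\abs\gamma\le c/\eps$. Since $\ell_c(y)=\frac{c}{\sinh c}\frac{\sinh\sqrt{c^2-y^2}}{\sqrt{c^2-y^2}}$ for $\abs y<c$, the correct pointwise bound is $\frac{\sinh(c\sqrt{1-a^2})}{\sqrt{1-a^2}\,\sinh c}$, and the step ``$\sinh(v)/b\le\cosh v$'' (with $v=c\sqrt{1-a^2}$, $b=\sqrt{1-a^2}$) amounts to $\tanh v\le b$, which fails badly whenever $a$ is close to $1$ and $c$ is not small — there the two differ by a factor $\sim c\sqrt{1-a^2}$. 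You acknowledge this and propose to ``absorb the discrepancy into the constant,'' but the constant has no room: combining your pointwise bound with your estimate $2\sum 1/\gamma\lesssim\frac1\pi\log(1/a)\log(c/\eps)$ gives a main term $\frac{\sinh(c\sqrt{1-a^2})\log(1/a)}{\pi\sqrt{1-a^2}\,\sinh c}\log(c/\eps)$, and comparing to the target $\frac{\cosh(c\sqrt{1-a^2})}{\pi c a^2\sinh c}\log(c/\eps)$ requires $c\,a^2\log(1/a)\tanh(c\sqrt{1-a^2})\le\sqrt{1-a^2}$. For, say, $a=0.99$ and $c=100$ (which the hypotheses permit, $\eps$ being free to be tiny), the left side is around $49$ while the right is $\approx 0.14$. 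So the approach is not merely lossy on constants; the inequality you would need is false. The essential missing idea is to exploit that $\ell_{c,\eps}(t)\,t$ has an elementary antiderivative in the variable $u=\sqrt{c^2-(\eps t)^2}$, rather than bounding $\ell_{c,\eps}$ by its supremum.
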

\begin{proof}
We denote zeros of the Riemann zeta function by $\rho = \beta + i\gamma$ with $\beta,\gamma \in \R$. Let
$N(t)$ denote the number of zeros of the Riemann zeta function (counted according to their multiplicity) with imaginary part in $(0,t]$ and let
\[
\tilde N(t) = \frac{t}{2\pi}\log \frac{t}{2\pi e} \eand R(t) = N(t) - \tilde N(t).
\]
Then Rosser's estimate \cite[p. 223]{Rosser41} implies 
\begin{equation}\label{e:R-bound}
R(t)= \Theta(0.5\log t)
\end{equation}
 for $t\geq 10^3$. By symmetry of the zeros, it suffices to treat the sum over $\gamma>0$.

We have
\begin{align}
\sum_{a\frac{c}{\eps} < \gamma \leq \frac{c}{\eps}} \frac{\ell_{c,\eps}(\gamma)}{\gamma} 
&= \int_{a\frac{c}{\eps}}^\frac{c}{\eps} \frac{\ell_{c,\eps}(t)}{t}\, d(\tilde N(t) + R(t)) \notag \\
&= \frac{1}{2\pi} \int_{a\frac{c}{\eps}}^\frac{c}{\eps} \ell_{c,\eps}(t) \log \frac{t}{2\pi} \frac{dt}{t} + \int_{a\frac{c}{\eps}}^\frac{c}{\eps}\frac{\ell_{c,\eps}(t)}{t}\, d R(t).\label{e:zsum-part-1}
\end{align}
First, we estimate the first integral on the right hand side of \eqref{e:zsum-part-1}. Using the inequality
\[
0 < \frac{1}{t}\log \frac{t}{2\pi} \leq \frac{\eps^2}{(ac)^2}\log\Bigl(\frac{c}{2\pi \eps}\Bigr) t
\]
and applying the substitution $u= \sqrt{c^2-(\eps t)^2}$ gives the bound
\begin{align}
0 < \frac{1}{2\pi} \int_{a\frac{c}{\eps}}^\frac{c}{\eps} \ell_{c,\eps}(t) \log \frac{t}{2\pi} \frac{dt}{t}
&\leq\frac{1}{2\pi (ac)^2} \log\Bigl(\frac{c}{2\pi \eps}\Bigr) \frac{c}{\sinh(c)}\int_{0}^{c\sqrt{1-a^2}}\!\!\!\!\!\! \sinh(u)\, du \notag \\
&\leq \frac{1}{2\pi ca^2} \log\Bigl(\frac{c}{2\pi \eps}\Bigr)\frac{\cosh(c\,\sqrt{1-a^2})}{\sinh(c)}.\label{e:zsum-part-2}
\end{align}

For the second integral in \eqref{e:zsum-part-1} we use partial integration, the bound from \eqref{e:R-bound} and the negativity of the derivative of $\ell_{c,\eps}(t)/t$ in the range of integration, which gives
\begin{align}
\int_{a\frac{c}{\eps}}^\frac{c}{\eps} \frac{\ell_{c,\eps}(t)}{t}\,dR(t)
&= \Theta\left(\left[\frac{\ell_{c,\eps}(t)}{t}R(t)\right]_{a\frac{c}{\eps}}^\frac{c}{\eps} - \int_{a\frac{c}{\eps}}^\frac{c}{\eps}\frac{d}{dt}\Bigl(  \frac{\ell_{c,\eps}(t)}{t}\Bigr) R(t)\, dt\right)\notag \\
&\leq  \frac{\eps}{ac} \ell_c(a c)\log\Bigl(\frac{c}{\eps}\Bigr) - 0.5 \int_{a\frac{c}{\eps}}^\frac{c}{\eps} \frac{d}{dt}\Bigl(  \frac{\ell_{c,\eps}(t)}{t}\Bigr) \log(t)\, dt\notag \\
&\leq 1.5 \frac{\eps}{ac} \ell_c(ac)\log\Bigl(\frac{c}{\eps}\Bigr) + 0.5 \int_{a\frac{c}{\eps}}^\frac{c}{\eps} \frac{\ell_{c,\eps}(t)}{t^2}\, dt \, \leq \, 1.6\frac{\eps}{ac} \ell_c(ac)\log\Bigl(\frac{c}{\eps}\Bigr),
\end{align}
where we also used $\log(c/\eps)>6$ on the last line. Since the function $t\mapsto \frac{t\cosh t}{\sinh t}$ is monotonically increasing in $[0,\infty),$ we have
\[
c\sqrt{1-a^2} \;\frac{\cosh(c\sqrt{1-a^2}  )}{\sinh( c\sqrt{1-a^2} )}\geq 1
\]
and therefore get
\[
1.6\frac{\eps}{ac} \log\Bigl(\frac{c}{\eps}\Bigr) \ell_c(ac) \leq 1.6\frac{\eps c}{ac} \log\Bigl(\frac{c}{\eps}\Bigr) \frac{\cosh(c\sqrt{1-a^2})}{\sinh (c)}.
\]
This and \eqref{e:zsum-part-2} yields the assertion, since we have
\[
  \frac{1}{2\pi ca^2} \log\Bigl(\frac{c}{2\pi \eps}\Bigr) + 1.6\frac{\eps c}{ac} \log\Bigl(\frac{c}{\eps}\Bigr)
\leq \frac{1}{2} \frac{1+11\,c\,\eps}{\pi ca^2}\log\Bigl(\frac{c}{\eps}\Bigr).
\]
\end{proof}

\begin{proof}[Proof of Theorem \ref{t:zsum-bounds}]
We recall that $\abs{\gamma}>14$ for all non-trivial zeros of the zeta function by well-known numerical results (see e.g. \cite{Lehman70}), so in particular we have $\abs{\rho/\gamma} \leq 1.08$. Consequently, Lemma \ref{l:ek-asymp} and \eqref{e:logan-der2} give the bound
\begin{align}
\abs{\Psi_{x,c,\eps}(\rho)} 
	&\leq \frac{x^\beta}{\abs{\gamma}\log x}\Bigl( 1 + \frac{\eps^2}{2c}\Bigl(\frac{1.08}{\log x} + \frac{2.4}{\log(x)^2}\Bigr)\Bigr)\abs{\ell_{c,\eps}\Bigl(\frac{\rho-1/2}{i}\Bigr)}\notag \\
	&\leq 1.001\frac{x^\beta}{\log(x)} \frac{\abs{\ell_{c,\eps}(\frac \rho i - \frac 1{2i})}}{\abs{\gamma}} .\label{e:Psi-bound}
\end{align}
Now the case $h=1/2$ in \eqref{e:zsum-remainder} follows directly from \eqref{e:Psi-bound}, where we may take $\beta=1/2$, and Lemma \ref{l:zsum1}. For the unconditional case $h=1$ we use
\begin{align*} 
\sum_{\abs{\Im(\rho)}>c/\eps} \frac{\abs{\ell_{c,\eps}(\frac \rho i - \frac 1{2i})}}{\abs{\gamma}}x^\beta 
	&= \sum_{\substack{\abs{\Im(\rho)}>c/\eps \\ \Re(\rho)=1/2}} \frac{\abs{\ell_{c,\eps}(\frac \rho i - \frac 1{2i})}}{\abs{\gamma}}\sqrt{x} \\
	 &\quad\quad\quad+ \sum_{\substack{\abs{\Im(\rho)}>c/\eps \\ \Re(\rho)>1/2}} \frac{\abs{\ell_{c,\eps}(\frac \rho i - \frac 1{2i})}}{\abs{\gamma}}(x^\beta + x^{1-\beta}) \\
	 &\leq \frac{x+1}{2}\sum_{\abs{\Im(\rho)}>c/\eps} \frac{\abs{\ell_{c,\eps}(\frac \rho i - \frac 1{2i})}}{\abs{\gamma}}.
\end{align*}
The bound \eqref{e:zsum-part} follows in a similar way from \eqref{e:Psi-bound} and Lemma \ref{l:zsum-logan-part}.
\end{proof}

\subsection{Further truncation of the sum over prime powers}
We provide an estimate for the remainder if the interval $[e^{-\eps}x,e^\eps x]$ in Theorem \ref{t:pice-to-pi} is further truncated to $[e^{-\alpha \eps}x, e^{\alpha\eps}x]$. This is particularly interesting for unconditional calculations. We abbreviate $\mu_{c,1}$ and $\nu_{c,1}$ by $\mu_c$ and $\nu_c$ respectively.

\begin{prop}\label{p:sieve-reduction}
Let $x\geq 100$, $\eps \leq 0.01$, $c\geq 1$, and let $\alpha\in(0,1)$, such that
\[
B := \frac{\eps \, x\, e^{-\eps}\abs{\nu_c(\alpha)}}{2\mu_c (\alpha)} > 1
\]
holds. Furthermore, let $I_\alpha^+ = [e^{\alpha\eps}x, e^\eps x]$ and $I_\alpha^- = [e^{-\eps}x, e^{-\alpha\eps}x]$. Then we have
\begin{equation}\label{e:sieve-reduction}
\abs{\sum_{p^m\in I_\alpha^\pm} \frac 1m M_{x,c,\eps}(p^m) } \leq \frac{2\eps \, x \, e^{2\eps} \abs{\nu_c(\alpha)}}{\log B} + e^\eps \mu_c(\alpha)\left(4.01\eps\sqrt{x} + \log\log(2 x^2)\right).
\end{equation}
\end{prop}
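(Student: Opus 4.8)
The plan is to separate the contributions of the primes ($m=1$) from those of the proper prime powers ($m\geq2$) and to reduce the whole estimate to a bound for $\sum_{p\in I_\alpha^\pm}|\mu_{c,\eps}(\log\tfrac px)|$. First I would collect the soft facts. From $\nce(y)=\eps^{-1}\eta_c(y/\eps)$ one reads off $\mu_{c,\eps}(t)=\mu_c(t/\eps)$ and $\nu_{c,\eps}(t)=\eps\,\nu_c(t/\eps)$; moreover $\mu_{c,\eps}$ is supported in $[-\eps,\eps]$, vanishes at $\pm\eps$, and satisfies $\tfrac{d}{dt}\mu_{c,\eps}(t)=-\nce(t)$ for $t\in(0,\eps)$, while $\nu_{c,\eps}$ also vanishes at $\pm\eps$. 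Since $\eta_c$ is positive and decreasing on $(0,1)$, so is $\mu_c$, and $\nu_c$ is negative and increasing there with $\nu_c(1)=0$; in particular $|\nu_c(s)|=\int_s^1\mu_c\leq(1-s)\mu_c(s)$. Combined with $\lce<1$ (cf. \eqref{e:lce-bounds}), $|\log\tfrac px|\leq\eps$ and $|\tfrac1{\log p}-\tfrac12|\leq\tfrac12$, this last inequality yields the pointwise bound $|M_{x,c,\eps}(t)|\leq e^{\eps}|\mu_{c,\eps}(\log\tfrac tx)|\leq e^{\eps}\mu_c(\alpha)$ for $t\in I_\alpha^\pm$, the final step using monotonicity of $\mu_{c,\eps}$ on $(0,\eps)$.

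For the proper prime powers this bound already suffices: by Lemma \ref{l:ppow-bound} (applicable since $x\geq100$, $\eps\leq10^{-2}$),
\[
\Bigl|\sum_{\substack{p^m\in I_\alpha^\pm\\m\geq2}}\tfrac1m M_{x,c,\eps}(p^m)\Bigr|\leq e^{\eps}\mu_c(\alpha)\sum_{\substack{p^m\in I\\m\geq2}}\tfrac1m\leq e^{\eps}\mu_c(\alpha)\bigl(4.01\eps\sqrt x+\log\log(2x^2)\bigr),
\]
which is the second term of \eqref{e:sieve-reduction}. So it remains to bound $\sum_{p\in I_\alpha^+}\mu_{c,\eps}(\log\tfrac px)$ (the interval $I_\alpha^-$ is treated identically after reflecting in $x$) and to multiply by $e^{\eps}$.

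Here I would proceed as follows. Put $a=e^{\alpha\eps}x$ and $b=e^{\eps}x$. Using $\mu_{c,\eps}(\log\tfrac bx)=\mu_{c,\eps}(\eps)=0$ and $\tfrac{d}{du}\mu_{c,\eps}(\log\tfrac ux)=-\tfrac1u\nce(\log\tfrac ux)$, the fundamental theorem of calculus gives $\mu_{c,\eps}(\log\tfrac px)=\int_p^b\nce(\log\tfrac ux)\tfrac{du}u$, and interchanging the finite sum with the integral turns the sum into $\int_a^b\bigl(\pi(u)-\pi(a)\bigr)\nce(\log\tfrac ux)\tfrac{du}u$. Two moment identities drive the estimate, both obtained by the substitution $y=\log(u/x)$ and an integration by parts exploiting that $\mu_{c,\eps}$ and $\nu_{c,\eps}$ vanish at $\eps$: first $\int_a^b\nce(\log\tfrac ux)\tfrac{du}u=\mu_c(\alpha)$, and second, using $e^y-e^{\alpha\eps}\leq e^{\eps}(y-\alpha\eps)$ together with $\int_{\alpha\eps}^{\eps}(y-\alpha\eps)\nce(y)\,dy=\eps|\nu_c(\alpha)|$,
\[
\int_a^b(u-a)\,\nce(\log\tfrac ux)\,\tfrac{du}u\leq e^{\eps}\eps x|\nu_c(\alpha)|.
\]
I would then invoke the Brun--Titchmarsh inequality \cite{MV73} for $\pi(u)-\pi(a)$ and split the $u$-integral at $u-a=B$: on $\{u-a>B\}$ bound $\pi(u)-\pi(a)\leq\frac{2(u-a)}{\log(u-a)}\leq\frac{2(u-a)}{\log B}$ and feed in the second identity; on $\{u-a\leq B\}$ bound $\pi(u)-\pi(a)\leq\pi(a+B)-\pi(a)\leq\frac{2B}{\log B}$ and feed in the first. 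The quantity $B$ is defined precisely so that $2B\mu_c(\alpha)=\eps x e^{-\eps}|\nu_c(\alpha)|$, which renders the two pieces comparable; adding them and tracking the numerical constants under $x\geq100$, $\eps\leq10^{-2}$, $c\geq1$, $B>1$ produces $\sum_{p\in I_\alpha^+}\mu_{c,\eps}(\log\tfrac px)\ll\frac{\eps x|\nu_c(\alpha)|}{\log B}$ with an explicit constant, hence (after the factor $e^{\eps}$) the first term of \eqref{e:sieve-reduction}.

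The hard part will be the Brun--Titchmarsh step, specifically the short interval near $u=a$: there $\pi(u)-\pi(a)$ must be controlled on intervals of length down to order $B$, where Brun--Titchmarsh only saves a factor $1/\log B$ instead of $1/\log x$ — this is exactly why the final bound carries $\log B$ in the denominator and why the hypothesis must keep $B$ away from $1$ (so that the inequality applies on intervals of length $\geq B$), and it is the balancing of the short- and long-interval pieces that fixes the value of $B$. By contrast, the integration-by-parts identities linking $\nce$, $\mu_c$ and $\nu_c$ are routine once the scalings $\mu_{c,\eps}(t)=\mu_c(t/\eps)$, $\nu_{c,\eps}(t)=\eps\,\nu_c(t/\eps)$ and the vanishing at $\pm\eps$ are in place.
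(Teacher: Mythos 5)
Your overall strategy matches the paper's: you split the sum at $m=1$ versus $m\geq 2$, establish the pointwise bound $\abs{M_{x,c,\eps}(t)} \leq e^\eps \abs{\mu_{c,\eps}(\log(t/x))}$ (equivalently $(1+\eps)\abs{\mu_{c,\eps}}$, using $\abs{\nu_{c,\eps}(y)}\leq\eps\abs{\mu_{c,\eps}(y)}$ and $\lce\leq 1$), then use Lemma~\ref{l:ppow-bound} together with $\abs{\mu_{c,\eps}}\leq\mu_c(\alpha)$ on $I_\alpha^\pm$ for the $m\geq 2$ part. Where you diverge is the remaining sum over primes. The paper packages this as Lemma~\ref{l:psum-mu}, which is a single application of a black-box sieve bound from \cite{buethe14} of the form $\sum_{p\in I} f(p) \leq 2\nrm{f}_{1,I}\bigl(\log\tfrac{\nrm{f}_{1,I}}{\nrm{f}_{\infty,I}+\nrm{f'}_{1,I}}\bigr)^{-1}$, whereas you attempt to re-derive this via Abel summation and Brun--Titchmarsh with a split at $u-a=B$.

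The gap is quantitative and not fixable within your framework. Carry your steps through: the split at $u-a=B$ gives
\begin{equation*}
\sum_{p\in I_\alpha^+}\mu_{c,\eps}\bigl(\log\tfrac{p}{x}\bigr) \;\leq\; \frac{2B\mu_c(\alpha)}{\log B} + \frac{2\eps x e^\eps\abs{\nu_c(\alpha)}}{\log B} \;=\; \frac{\bigl(e^{-\eps}+2e^\eps\bigr)\,\eps x\abs{\nu_c(\alpha)}}{\log B},
\end{equation*}
using $2B\mu_c(\alpha)=\eps x e^{-\eps}\abs{\nu_c(\alpha)}$. After the extra factor $e^\eps$ this produces the coefficient $1+2e^{2\eps}\approx 3$ in front of $\eps x\abs{\nu_c(\alpha)}/\log B$, but the Proposition requires $2e^{2\eps}\approx 2$. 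Moving the split point $T$ cannot repair this: the bound $\bigl(2T\mu_c(\alpha)+2\eps x e^\eps\abs{\nu_c(\alpha)}\bigr)/\log T$ needs $T\geq B$ in order to have $\log T\geq\log B$, and then $2T\mu_c(\alpha)\geq 2B\mu_c(\alpha)=\eps x e^{-\eps}\abs{\nu_c(\alpha)}$, so the overhead term is never small relative to the main term; equivalently, one needs $\tfrac{Te^{-2\eps}}{2B}\leq\tfrac{\log(T/B)}{\log B}$, which fails for every $T>B$. So the Abel summation plus Brun--Titchmarsh route has the right shape but provably cannot reach the clean factor $2$ of the cited sieve bound (it only attains it asymptotically as $B\to\infty$, while the Proposition holds for all $B>1$); this is exactly why the paper invokes Theorem~4.3 of \cite{buethe14} rather than re-deriving it.

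Two small slips: $\abs{\tfrac1{\log p}-\tfrac12}\leq\tfrac12$ should read $\log p^m$ (the argument of $M_{x,c,\eps}$); it is false for $p=2$, $m=1$, but correct with $\log p^m\geq\log 99$. And $\pi(u)-\pi(a)$ in the Abel step should really be $\pi(u)-\pi(a^-)$ to count $p=a$ when $a$ is prime; this is immaterial to the constants. The moment identities $\int_a^b \nce(\log\tfrac{u}{x})\tfrac{du}{u}=\mu_c(\alpha)$ and $\int_{\alpha\eps}^\eps(y-\alpha\eps)\nce(y)\,dy=\eps\abs{\nu_c(\alpha)}$ are both correct.
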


\begin{remark}
Since $I_0(t)\sim e^t/\sqrt{2\pi t}$, we have
\[e^{c\sqrt{1-\alpha^2} - c}/c^{1/2} \ll \mu_c(\alpha) \ll e^{c\sqrt{1-\alpha^2} - c}/c^{1/2}\]
and
$\abs{\nu_c(\alpha)} \ll e^{c\sqrt{1-\alpha^2} - c}/c^{3/2}$ uniformly for $\alpha\in(\delta,1-\delta)$ with any $\delta>0$. So for calculations not assuming the RH, where one would choose $c\geq \log x$ and $\eps=C\sqrt{\log(x)/x}$ the right hand side of \eqref{e:sieve-reduction} is $\ll x^{\sqrt{1-\alpha^2}}/(\sqrt{x}\log(x))$ for $x\to\infty$. So the interval $[e^{-\eps}x,e^\eps x]$ may be asymptotically reduced in length by a factor $\sqrt{3/4}$ for unconditional computations.
\end{remark}

We use the following lemma, which is based on a sieve bound for weighted sums over prime numbers from \cite{buethe14}.

\begin{lemma}\label{l:psum-mu}
Let $x>1,\eps<1$ and $\alpha\in(0,1)$, such that
\[
B := \frac{\eps x e^{-\eps}\abs{\nu_c(\alpha)}}{2\mu_c(\alpha)} > 1
\]
holds. Then we have
\begin{equation}\label{e:Ia-bound}
\sum_{ p \in I_\alpha^\pm} \abs{\mu_{c,\eps}\left(\log \frac p x\right)} \leq 2 \frac{\eps \, x \,  e^\eps \abs{\nu_c(\alpha)}}{\log B}.
\end{equation}
\end{lemma}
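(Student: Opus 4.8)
The plan is to rewrite the left-hand side as a weighted prime sum $\sum_{p\in I_\alpha^\pm}w(p)$ with $w(p)=\abs{\mu_{c,\eps}(\log(p/x))}$, and then apply the weighted Brun--Titchmarsh estimate of \cite{buethe14}. First I would record the scaling identity $\mu_{c,\eps}(t)=\mu_c(t/\eps)$ (immediate from $\nce(y)=\tfrac1\eps\eta_c(y/\eps)$), together with the description of $\mu_c$ on $(-1,1)$: it is odd, and for $s\in(0,1)$ one has $\mu_c(s)=\int_s^1\eta_c(u)\,du$; since $\eta_c\geq 0$, $\mu_c$ is positive and strictly decreasing on $(0,1)$ with $\mu_c(1)=0$. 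Consequently, as $p$ runs through $I_\alpha^\pm$, the quantity $\log(p/x)/\eps$ runs through $[\alpha,1]$ (for $I_\alpha^+$) resp.\ $[-1,-\alpha]$ (for $I_\alpha^-$), so $w$ is monotone on $I_\alpha^\pm$, vanishes at the endpoint farthest from $x$, and attains its maximum $\mu_c(\alpha)$ at the endpoint nearest $x$; in particular $\nrm{w}_{\infty,I_\alpha^\pm}=\mu_c(\alpha)$.

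Next I would compute the mass of $w$. The substitution $p=x e^{\pm\eps t}$ gives
\[
\int_{I_\alpha^\pm} w(p)\,dp \;=\; \eps x\int_\alpha^1 \mu_c(t)\,e^{\pm\eps t}\,dt ,
\]
and from $\nu_c'=\mu_c$, $\nu_c(1)=0$ (by oddness of $\mu_c$ and $\supp\mu_c\subset[-1,1]$) and $\nu_c(\alpha)<0$ one gets $\int_\alpha^1\mu_c(t)\,dt=-\nu_c(\alpha)=\abs{\nu_c(\alpha)}$, hence
\[
\eps x e^{-\eps}\abs{\nu_c(\alpha)} \;\leq\; \int_{I_\alpha^\pm} w(p)\,dp \;\leq\; \eps x e^{\eps}\abs{\nu_c(\alpha)} .
\]
Combining the lower bound with the definition of $B$ yields $\int_{I_\alpha^\pm} w \geq 2\,\mu_c(\alpha)\,B = 2\,\nrm{w}_{\infty}\,B$, i.e.\ the ``effective support length'' $\int w/(2\nrm{w}_{\infty})$ is $\geq B>1$.

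Finally, I would invoke the sieve bound of \cite{buethe14}: for a non-negative weight $w$ of admissible shape supported on a bounded interval it furnishes an estimate of the form $\sum_p w(p)\leq 2\bigl(\int w\bigr)/\log\bigl(\int w/(2\nrm{w}_{\infty})\bigr)$, valid precisely when $\int w/(2\nrm{w}_{\infty})>1$ --- the underlying argument being a layer-cake decomposition of $w$ into its super-level intervals $\{w>\lambda\}$ followed by Montgomery--Vaughan's form of Brun--Titchmarsh on each, which is exactly why the hypothesis $B>1$ is imposed. Feeding in the two displayed estimates, and using $\log\bigl(\int w/(2\nrm{w}_{\infty})\bigr)\geq\log B$ together with $\int w\leq \eps x e^{\eps}\abs{\nu_c(\alpha)}$, gives \eqref{e:Ia-bound}. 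The step I expect to need the most care is aligning the normalisation with the precise statement of the cited bound --- the factor $2$ inside the logarithm and the $e^{\pm\eps}$ distortions coming from the change of variables $p=xe^{\pm\eps t}$ are exactly why $B$ is defined with an $e^{-\eps}$ while \eqref{e:Ia-bound} carries an $e^{\eps}$ --- and checking that $w$ satisfies the hypotheses of that bound (admissible shape, support bounded away from the small primes, and $B>1$ so the denominator is positive); the prime-counting input itself is entirely standard.
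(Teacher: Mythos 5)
Your proposal is correct and follows essentially the same route as the paper: rewrite the sum as a prime-weighted sum, compute $\nrm{w}_{1}$ via the substitution $p = x e^{\pm\eps t}$ to get $\eps x\int_\alpha^1 e^{\pm\eps t}\mu_c(t)\,dt$, bound it by $\eps x e^{\eps}\abs{\nu_c(\alpha)}$ above and $\eps x e^{-\eps}\abs{\nu_c(\alpha)}$ below, and apply \cite[Theorem 4.3]{buethe14}. The only cosmetic difference is that you quote the sieve bound with denominator $2\nrm{w}_\infty$ in the logarithm while the paper's Theorem 4.3 uses $\nrm{f}_{\infty,I} + \nrm{f'}_{1,I}$; you correctly observe these coincide because $f$ is monotone on $I_\alpha^\pm$, and you are also slightly more explicit than the paper in separating the upper bound (for the numerator) from the lower bound (for the logarithm in the denominator).
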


\begin{proof}
We give the proof for $I=I_\alpha^+$. For $t\in I$ let
\[
f(t) = \mu_c \Bigl(\frac 1\eps \log \frac t x\Bigr).
\]
Then $f$ satisfies the conditions of \cite[Theorem 4.3]{buethe14} and we therefore have
\[
\sum_{p\in I} f(p) \leq 2 \nrm{f}_{1,I}\Bigl(\log \frac{\nrm{f}_{1,I}}{\nrm{f}_{\infty,I} + \nrm{f'}_{1,I}}\Bigr)^{-1}.
\]
Since $f$ is monotonously decreasing on $I$, we have $\nrm{f}_{\infty,I} = \nrm{f'}_{1,I} = \mu_{c}(\alpha)$, and the substitution $u=\frac1\eps \log \frac t x$ yields
\[
\nrm{f}_{1,I} = \int_{e^{\alpha\eps}x}^{e^\eps x} f(t)\, dt 
= \eps x \int_{\alpha}^1 e^{\eps u} \mu_{c}(u)\, du \leq -\eps \, x \,  e^\eps \nu_c(\alpha),
\]
which yields \eqref{e:Ia-bound}. The interval $I_\alpha^-$ can be treated similarly.
\end{proof}

\begin{proof}[Proof of Proposition \ref{p:sieve-reduction}]
From \eqref{e:munu-bounds} we obtain the bound
\begin{equation*}
\abs{m_{x,c,\eps}(t) } \leq \frac {e^{t/2}} t\abs{\mu_{c,\eps}\bigl(t-\log(x)\bigr)}(1 + \eps)
\end{equation*}
for $t\neq \log(x)$. Therefore, we have
\begin{equation*}
\abs{\sum_{p^m\in I_\alpha^\pm} \frac 1m M_{x,c,\eps}(p^m) } \leq e^\eps \sum_{p^m\in I_\alpha^\pm}  \frac 1m \abs{\mu_{c,\eps}\left(\log\frac {p^m} x\right)}.
\end{equation*}
The assertion of Proposition \ref{p:sieve-reduction} now follows directly from Lemma \ref{l:psum-mu} and Lemma \ref{l:ppow-bound}.
\end{proof}

\section{Implementation}
The implementation of the method is very similar to the implementation of Method I in \cite{FKBJ}. Therefore, the description in this paper is kept short and focuses on different aspects.

\subsection{Evaluation of the sum over zeros}
We need to evaluate the function
\begin{equation*}
\Psi_{x,c,\eps}(\rho) = \psi_{x,c,\eps}(\rho) \ell_{c,\eps}\Bigl(\frac{\rho}{i} - \frac{1}{2i}\Bigr),
\end{equation*}
where
\begin{equation}\label{e:psi-def}
\psi_{x,c,\eps}(\rho) = \E_1(\rho\log x) + \Ace(\rho\E_2(\rho\log x) - 2\rho^2\E_3(\rho\log x))
\end{equation}
within an accuracy of $O(x^{-A})$ for some $A>1$. As a corollary of Lemma \ref{l:ek-asymp} we get the following asymptotic expansion for $\psi_{x,c,\eps}(\rho)$:
\begin{cor}\label{c:psi-comp}
Let $c\geq 10$ and $\eps\leq 0.01$. For $j\geq 1$, let
\begin{equation*}
\alpha_j(x,c,\eps) = \frac{(j-1)!}{\log(x)^j} + \Ace\Bigl(\frac{j!}{\log(x)^{j+1}} - \frac{(j+1)!}{\log(x)^{j+2}}\Bigr).
\end{equation*}
Furthermore, let $\abs{\Im(\rho)}>14$, $\Re(\rho)\in(0,1)$ and $n+2\leq 10\log x$. Then we have
\begin{equation}\label{e:psi-asymp}
\psi_{x,c,\eps}(\rho) =  \sum_{j=1}^{n} \alpha_j(x,c,\eps) \frac{x^\rho}{\rho^j} + \Theta\Bigl(1.01 n!\frac{x^{\Re(\rho)}}{(\abs{\Im(\rho)}\log x)^{n+1}}  \Bigr).
\end{equation}
\end{cor}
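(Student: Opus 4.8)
The plan is to derive the corollary as a direct consequence of the asymptotic expansion in Lemma \ref{l:ek-asymp}, applied separately to $\E_1$, $\E_2$, and $\E_3$ at the point $z=\rho\log x$, and then reorganize the three expansions into a single sum with coefficients $\alpha_j(x,c,\eps)$. Writing $z=\rho\log x$, so that $\Re(z)=\Re(\rho)\log x$ and $\abs{\Im(z)} = \abs{\Im(\rho)}\log x$, I would first record
\[
\E_k(\rho\log x) = \sum_{l=0}^{m_k}\frac{(k+l-1)!}{(k-1)!}\frac{x^\rho}{(\rho\log x)^{l+k}} + \Theta\Bigl(\frac{(k+m_k)!}{(k-1)!}\frac{x^{\Re(\rho)}}{(\abs{\Im(\rho)}\log x)^{k+m_k+1}}\Bigr)
\]
for $k=1,2,3$, choosing the truncation indices $m_k$ so that all three finite sums, once multiplied by $1$, $\Ace\rho$, and $-2\Ace\rho^2$ respectively, contribute powers $x^\rho/\rho^j$ only for $j\le n$: that forces $m_1=n-1$, $m_2=n-2$, $m_3=n-3$ (with the convention that an empty sum is zero when $n$ is too small, which is harmless since for $n\le 2$ the whole statement is vacuous under the stated hypotheses $n+2\le 10\log x$ together with $\log x$ large).

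Next I would substitute these into the definition \eqref{e:psi-def} of $\psi_{x,c,\eps}(\rho)$. Collecting the coefficient of $x^\rho/\rho^j$ for each $j$ from $1$ to $n$: the $\E_1$ term contributes $(j-1)!/\log(x)^j$ (taking $l=j-1$); the $\Ace\rho\,\E_2$ term contributes $\Ace\cdot j!/\log(x)^{j+1}$ (taking $l=j-1$ in the $\E_2$ expansion so that $l+2 = j+1$, using $(2+l-1)!/(2-1)! = (l+1)! = j!$); and the $-2\Ace\rho^2\,\E_3$ term contributes $-2\Ace\cdot\tfrac{1}{2}(j+1)!/\log(x)^{j+2} = -\Ace(j+1)!/\log(x)^{j+2}$ (taking $l=j-1$ in the $\E_3$ expansion so that $l+3=j+2$, using $(3+l-1)!/(3-1)! = (l+2)!/2 = (j+1)!/2$). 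This is exactly $\alpha_j(x,c,\eps)$, which verifies the leading part of \eqref{e:psi-asymp}.

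The remaining task is to bound the total error term, and this is the only place requiring a small amount of care. The three error contributions, after multiplication by $1$, $\abs{\Ace\rho}$, $2\abs{\Ace\rho}^2$, are of the shape $(\text{const}_k)\cdot\abs{\Ace}^{e_k}\abs{\rho}^{e_k}\cdot n!\,x^{\Re(\rho)}/(\abs{\Im(\rho)}\log x)^{n+1}$ with $e_1=0,e_2=1,e_3=2$, after absorbing the small polynomial factors $(1+l)!/(2n)!$-type ratios into explicit constants using $m_k\le n-1$; here one uses $\abs{\rho}\le 1.08\,\abs{\Im(\rho)}$ from $\abs{\Im(\rho)}>14$ (as in the proof of Theorem \ref{t:zsum-bounds}), the bound $0<\Ace=-\ell_{c,\eps}''(0)/2\le \eps^2/(2c)$ from \eqref{e:logan-der2}, and $c\ge 10$, $\eps\le 0.01$, so that $\abs{\Ace\rho}$ is tiny relative to $\abs{\Im(\rho)}\log x$. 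The main subtlety is matching the power of $\abs{\Im(\rho)}\log x$ in the denominator: the $\E_2$ error with $m_2=n-2$ has denominator exponent $2+(n-2)+1 = n+1$ and the $\E_3$ error with $m_3=n-3$ has exponent $3+(n-3)+1=n+1$, so all three align at exponent $n+1$, but the surviving factors $\abs{\rho}$ and $\abs{\rho}^2$ must then be controlled by extra factors of $\abs{\Im(\rho)}$ harvested from those denominators — i.e. one actually writes the $\E_k$ error for $k\ge 2$ with one fewer term than this, keeping $m_2=n-1$ and $m_3 = n-1$ and noting the extra leading powers in those sums are still $\le n$ so they fold into the $\alpha_j$ sum. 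I would carry out exactly that bookkeeping, and finally bound the sum of the three explicit constants times the absorbed factorial ratios by $1.01$, which is comfortable since the dominant term is the $\E_1$ contribution with constant $1$ and the other two are damped by $\abs{\Ace}\le \eps^2/(2c)\le 5\cdot10^{-7}$. The hypothesis $n+2\le 10\log x$ is used only to ensure the factorial ratios $(k+m_k)!/(k-1)!\cdot 1/n!$ stay bounded (they are in fact $\le$ a small constant) and that the expansion is not being pushed past its useful range; it plays no deeper role.
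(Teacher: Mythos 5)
Your proof is, after the self-correction in the final paragraph, exactly the paper's argument: apply Lemma~\ref{l:ek-asymp} to each $\E_k(\rho\log x)$ truncated at $l=n-1$, read off $\alpha_j$ from the $l=j-1$ terms, and bound the three error contributions using $\abs{\rho}/\abs{\Im(\rho)}\leq 1.08$, $\Ace\leq \eps^2/(2c)$, and $(n+2)/\log x\leq 10$. However, your opening claim that $j\le n$ forces $m_1=n-1$, $m_2=n-2$, $m_3=n-3$ is simply wrong and contradicts your own coefficient computation (which takes $l=j-1$ in all three expansions, so $j$ reaches $n$ only if each $m_k=n-1$); you catch this at the end, but the detour through the inconsistent choice, and the ensuing discussion of error exponents not aligning, should be cut in favor of starting from $m_1=m_2=m_3=n-1$. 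Two minor slips: $\eps^2/(2c)\leq (0.01)^2/20 = 5\times 10^{-6}$ (you wrote $5\times 10^{-7}$, which is harmless here), and the assertion that the statement is vacuous for $n\le 2$ is false — the hypothesis $n+2\le 10\log x$ places no lower bound on $n$ and the case of small $n$ needs no special treatment once $m_k=n-1\geq -1$ is observed, which Lemma~\ref{l:ek-asymp} permits.
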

Choosing e.g. $n=\lfloor \log(x)\rfloor$ gives an error term of size $O(x^{-3})$ which is sufficiently small for practical applications.

\begin{proof}
The main term in Lemma \ref{l:ek-asymp} gives the main term in \eqref{e:psi-asymp} and the sum of the $\Theta$-terms is bounded by
\begin{multline}\label{e:psi-comp-1}
n!\frac{x^{\Re(\rho)}}{\abs{\Im(\rho)\log x}^n}\left[1 + A_{c,\eps}\left(\frac{(n+1)\abs{\rho}}{\abs{\Im(\rho)\log x}} + \frac{(n+1)(n+2)\abs{\rho}^2}{\abs{\Im(\rho)\log x}^2}\right)\right] \\
\leq 1.01 \frac{x^{\Re(\rho)}}{(\abs{\Im(\rho)}\log x)^{n+1}},
\end{multline}
where we used
\begin{equation*}
\frac{\abs{\rho}}{\abs{\Im(\rho)}} \leq \frac{1+14}{14} < 1.08,\quad\quad \frac{n+2}{\log(x)} \leq 10,\quad\quad\text{and}\quad A_{c,\eps} \leq 5\times 10^{-6}.
\end{equation*}
\end{proof}

For the purpose of calculating $\Psi_{x,c,\eps}(\rho)$ one may safely assume $\Re(\rho)=1/2$. The evaluation of $\Psi_{x,c,\eps}(\rho)$ can then be sped up by using piecewise Chebyshov approximation on the slowly varying function
\[
\Psi_{x,c,\eps}(1/2+it) x^{-it}.
\]
This way the most time consuming part in calculating $\Psi(1/2+i\gamma)$ is the evaluation of $\exp(i\gamma\log x)$.

\subsection{Evaluation of the sum over prime powers}
We intend to calculate the sum
\begin{equation}\label{e:psum}
\sum_{p^m\in I} \frac{1}{m}M_{x,c,\eps}(p^m),
\end{equation}
where $I = [e^{-\alpha\eps}x, e^{\alpha\eps}x]$, within an accuracy $<\delta$. To this end it suffices to evaluate
\begin{equation}\label{e:Mxce-implement}
M_{x,c,\eps}(t) = \lce\Bigl[\mu_{c,\eps}\bigl(\log\tfrac{t}{x}\bigr) + \Bigl(\frac{1}{\log t} - \frac{1}{2}\Bigr)\Bigl(\mu_{c,\eps}\bigl(\log\tfrac{t}{x}\bigr)\log\tfrac{t}{x} - \nu_{c,\eps}\bigl(\log\tfrac{t}{x}\bigr)\Bigr)\Bigr]
\end{equation}
within an accuracy of $O(x^{-A})$ for some $A>1$. This can be done using the power series expansion
\[
\eta_c(t) = \chi^*_{[-1,1]} \sum_{k=0}^\infty \lambda_k t^{2k}
\]
from \cite[Section 4.2]{FKBJ}, where
\[
\lambda_k = -\frac{(c/2)^{k+1}}{k!\sinh(c)}I_k(c),
\]
and where
\[
I_k(c) = \sum_{n=0}^\infty \frac{(c/2)^{2n+k}}{n! (n+k)!}
\]
denotes the $k$-th modified Bessel function of the first kind. Since $\lim_{h\searrow 0}\mu_c(h) = 1/2$ and $\nu_c(0) = -\frac{I_1(c)}{2\sinh(c)}$, this gives
\begin{equation}\label{e:mu-expansion}
\mu_c(t) = \frac 12 - \sum_{k=0}^\infty \frac{\lambda_k}{2k+1} t^{2k+1} 
\end{equation}
and
\begin{equation}\label{e:nu-expansion}
\nu_{c}(t) = -\frac{I_1(c)}{2\sinh(c)} + \frac{t}{2} - \sum_{k=0}^\infty \frac{\lambda_k}{(2k+1)(2k+2)} t^{2k+2} 
\end{equation}
for $t\in(0,1)$. On the reasonable assumption that $\log(x) < 2c \leq 4\log(x)$ it follows from the considerations in \cite[Section 4.2]{FKBJ} that the error from truncating these series at $k = \lceil ce\rceil$ is $< 6\log (x) x^{-1.8}$, which suffices for all practical applications.

For the calculation of $p^m\in I$ with $m\geq 2$ the prime powers $p^m$ are enumerated with the Eratosthenes sieve and $M_{x,c,\eps}(p^m)$ is evaluated directly, using the power series \eqref{e:mu-expansion} and \eqref{e:nu-expansion}. For the calculation of the contribution of prime numbers in $I$ the interpolation techniques from \cite{FKBJ} are used. The interval $I$ is dissected into subintervals $I_k = [a_k,b_k]$ of length $\leq 2^{20}$ and the sum over $p\in I_k$ is approximated from the data
\begin{equation}\label{e:sieve-data}
s_{k,l} = \sum_{p\in I_k} (p-a_k)^l
\end{equation}
using quadratic interpolation. For calculations with $x>10^{20}$ the space requirement of the Eratosthenes sieve is currently reduced by calculating the sum as
\begin{equation*}
\sum_{p\in I} M_{x,c,\eps}(p) = \sum_{\substack{n\in I\\p\mid n \Rightarrow p> B}} M_{x,c,\eps}(n) - \sum_{\substack{p q\in I\\  p,q \geq B}} M_{x,c,\eps}(pq),
\end{equation*}
where $(e^{\alpha\eps}x)^{1/3} < B < e^{-\alpha\eps}x$, which is outlined in detail in \cite{FKBJ}. This way the the space requirement for the sieving process can be reduced to $O(x^{1/3+\delta})$ but at the expense of  increasing the run time to $O(x^{2/3+\delta})$ (assuming $\eps \ll x^{\delta' - 1/2}$). This can be avoided by implementing the dissected Atkin-Bernstein sieve described in \cite{Galway04}, which has been carried out in \cite{Platt15}.

\subsection{Numerical calculations}
For the unconditional calculations of $\pi(10^{24})$ and $\pi(10^{25})$ we both took $c=62$ and
$\eps = 6.2\times 10^{-10}$ and computed the sum for $\abs{\Im(\rho)}$ up to $10^{11}$. The calculation of
 $\pi(10^{24})$ took less than $3,900$ hours and the calculation of $\pi(10^{25})$ took less than $40,000$
 on $2.27$ GHz Intel Xeon X7560 CPUs provided by the Hausdorff Center for Mathematics in Bonn. Evaluating the sum over zeros took less than $100$ hours so most of the time was spent sieving the intervals of lengths $1.24\times 10^{15}$ resp. $1.24\times 10^{16}$ around $10^{24}$ and $10^{25}$. With the same amount of zeros both run times could have been reduced by approx. $32\%$ choosing $a = 0.8$ in Theorem \ref{t:zsum-bounds} and $\alpha = 0.84$ in Proposition \ref{p:sieve-reduction}, and reducing $\eps$ to $4.96\times 10^{-10}$. This has not been done since these results were not available when the calculations were started. An even further reduction of the run times could have been achieved by using additional zeros of the zeta function. For these calculations it would have been optimal to use all zeros with imaginary part up to $6\times 10^{11}$ resp. $2\times 10^{12}$, which by linear extrapolation projects to run times of $1,300$ resp. $4,000$ hours.

\section{Conclusion}\label{s:calculation}
The present method and the analytic methods in \cite{FKBJ} and \cite{Platt15} can be compared quite well, since the arithmetic mean of the truncation bound in the sum over zeros and the length of the interval around $x$ in the sum over prime powers is
\begin{equation}\label{e:gm-asymp}
\sim C \sqrt{x\log x}
\end{equation}
for $x\to\infty$, which has already been mentioned in \cite{FKBJ}. The constant $C$ gives a good measure for the method's efficiency, since the run time for calculating the sums in question for different methods is practically determined by the number of summands if interpolation techniques are used. With the improvements from this paper we get the admissible values stated in Table \ref{tbl:C-values}, where the new method is referred to as Method III, and where the assumption \textit{partial RH} means, that the Riemann Hypothesis is known up to a small multiple of the truncation bound.

\renewcommand{\arraystretch}{1.2}
\vspace*{.2cm}
\begin{center}
\begin{table}[h]
\caption{Admissible values for $C$.}\label{tbl:C-values}
\begin{tabular}{l|c|c|c}
method $\backslash$ assumptions				& unconditional	& partial RH	& RH\\
\hline
Galway-Platt	 \cite{Platt15,Galway04} 	&	$2^{3/4}$		& $2^{1/2}$	& $2^{1/2}$ \\
Method I/II	\cite{FKBJ}	& $2^{1/2}$		& $2^{1/2}	$  & $1$	\\
Method III & $3^{1/4}$ & $(3/2)^{1/2}$ &$1$ \\
\end{tabular}
\end{table}
\end{center}

\bibliographystyle{amsplain}

\bibliography{/home/jbuethe/TeX/inputs/jankabib.bib}
\end{document}